\theoremstyle{plain}
\newtheorem{theorem}{Theorem}[section]
\newtheorem{lemma}[theorem]{Lemma}
\newtheorem{corollary}[theorem]{Corollary}
\theoremstyle{definition}
\theoremstyle{remark}
\newcommand{\m}{\mathfrak{m}}
\newcommand{\J}{\mathfrak{J}}
\newcommand{\gr}{\mathrm{gr}}
\newcommand{\e}{\textbf{e}}
\newcommand{\1}{\textbf{1}}
\newcommand{\Z}{\mathbb{Z}}
\newcommand{\C}{\mathcal{C}}
\renewcommand{\P}{\mathcal{P}}
\newcommand{\Q}{\mathcal{Q}}
\begin{document}
\title[Jacobson graph]{Cycles and paths in Jacobson graphs}
\author{A. Azimi}
\author{M. Farrokhi D. G.}
\keywords{Jacobson graph, Hamiltonian cycle, Hamiltonian path, Eulerian tour, Eulerian trail, pancyclic}
\subjclass[2000]{Primary 05C45; Secondary 16P10, 13H99, 16N20.}
\address{Department of Pure Mathematics, Ferdowsi University of Mashhad, Mashhad, Iran}
\email{ali.azimi61@gmail.com}
\address{Department of Pure Mathematics, Ferdowsi University of Mashhad, Mashhad, Iran.}
\email{m.farrokhi.d.g@gmail.com}
\begin{abstract}
All finite Jacobson graphs with a Hamiltonian cycle or path, or Eulerian tour or trail are determined, and it is shown that a finite Jacobson graph is Hamiltonian if and only if it is pancyclic. Also, the length of the longest induced cycles and paths in finite Jacobson graphs are obtained.
\end{abstract}
\maketitle
\section{Introduction}
A \textit{Hamiltonian cycle (resp. path)} in a graph is a cycle (resp. path) including all the vertices of the graph, respectively. Similarly, an \textit{Eulerian tour or circuit (resp. trail)} in a graph is a closed walk (resp. walk) including all the edges of the graph, respectively. A graph is \textit{Hamiltonian} if it has a Hamiltonian cycle and it is \textit{Eulerian} if it has an Eulerian tour or circuit. Finding a cycle or path with a given property is usually a difficult problem and it is known to be an NP-complete problem in general, even for the special case of Hamiltonian cycles.

Let $R$ be a commutative ring with non-zero identity. The \textit{Jacobson graph} of $R$, denoted by $\J_R$, is a graph whose vertex set is $R\setminus J(R)$ such that two distinct vertices $x,y\in V(\J_R)$ are adjacent whenever $1-xy\not\in U(R)$, in which $U(R)$ is the group of units of $R$. Recall that the Jacobson radical $J(R)$ of $R$ is the intersection of all maximal ideals of $R$ and it has the property that $1-xr\in U(R)$ whenever $x\in J(R)$ and $r\in R$. 

The authors in \cite{aa-ae-mfdg} introduce the Jacobson graphs and study several graph theoretical properties of them. They prove that the Jacobson graph of a finite local ring $(R,\m)$ is a union of at most two complete graphs $K_{|\m|}$ together with some complete bipartite graphs $K_{|\m|,|\m|}$, while the Jacobson graph of a finite non-local ring is always connected with diameter at most $3$ and girth equal to $3$, except the Jacobson graph of the ring $\Z_2\oplus\Z_2$, which is a path of length two. Also, all finite rings with planar or perfect Jacobson graphs are determined and many numerical invariants of Jacobson graphs  including dominating number, independence number and vertex and edge chromatic numbers are computed.

The aim of this paper is to investigate some special cycles and paths in Jacobson graphs. In section 2, we will determine all Jacobson graphs with a Hamiltonian cycle or path, or Eulerian tour or trail. Moreover, it is shown that a Jacobson graph is Hamiltonian if and only if it contains cycles of all possible lengths. In section 3, we shall compute the length of the longest induced cycles and paths in Jacobson graphs. It is worth noting that induced cycles have many connections with other graph theoretical properties. For instance, the strong perfect graph theorem states that a graph $G$ is perfect if and only if neither $G$ nor its complement $\overline{G}$ contains an induced odd cycle of length $5$ or more. Also, Scott \cite{ads} gives a connection between induced cycles and the chromatic number of a graph. There are many works encountering the problem of finding the length of the longest induced cycles in several classes of graphs and we refer the reader to \cite{pa-sp,jla-pv,fb-fh,edf,yk} for details. Finally, we note that the graphs under consideration are good instances of the graphs studied in \cite{na,ye,vn-rhs}.

We begin with recalling some structural results on rings with non-zero identities. A finite ring $R$ is \textit{local} if it has only one maximal subgroup, say $\m$, for which the quotient ring $R/\m$ is a field. By \cite[Theorem VI.2]{brm}, a finite ring $R$ with a non-zero identity can be decomposed uniquely into a direct sum of finite local rings $R_1,\ldots,R_n$, that is
\[R=R_1\oplus\cdots\oplus R_n.\]
Is it known that
\[J(R)=J(R_1)\oplus\cdots\oplus J(R_n)\]
and
\[U(R)=U(R_1)\oplus\cdots\oplus U(R_n).\]
Utilizing the above notations, two distinct vertices $(x_1,\ldots,x_n)$ and $(y_1,\ldots,y_n)$ are adjacent if and only if $1-x_iy_i\notin U(R_i)$ or equivalently $1-x_iy_i\in J(R_i)$ for some $1\leq i\leq n$. We shall use this observation frequently in the proofs. For given such a decomposition, we use $\1$ for the element $(1,\ldots,1)$ with all entries equal to $1$ and $\e_i$ for the element with $1$ on its $i$th entry and $0$ elsewhere, for $i=1,\ldots,n$. Throughout this paper, all rings are commutative with a non-zero identity.
\section{Hamiltonian cycles and paths, and Eulerian tours and trails}
Let $R$ be a finite ring. If $R$ is a local ring with associated field $F$, then by the previous discussions, $\J_R$ is connected only if $|F|=2$, for which $\J_R$ is a complete graph. Hence, to find a Hamiltonian cycle or path in $\J_R$, we further assume that $R$ is a non-local ring. Note that the situation for Eulerian tours and trails is not as obvious as for Hamiltonian cycles and paths, when $R$ is a local ring. In fact, we show that $\Z_2$ is the only finite local ring which has an Eulerian tour, and $\Z_2$ and $\Z_4$ are the only finite local rings which have an Eulerian trail.

We begin with constructing Hamiltonian cycles and paths for Jacobson graphs of finite non-local rings.
\begin{theorem}\label{Hamiltonian}
Let $R$ be a finite non-local ring. Then $\J_R$ is Hamiltonian except when $R\cong\Z_2\oplus F$ ($F$ field), in which case $\J_R$ has a Hamiltonian path.
\end{theorem}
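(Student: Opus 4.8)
The plan is to reduce everything to the residue fields and then build the cycle (or path) by hand. Write $R=R_1\oplus\cdots\oplus R_n$ with $n\ge 2$, let $\m_i$ be the maximal ideal of $R_i$, $F_i=R_i/\m_i$ the residue field and $q_i=|F_i|$. Since $1-x_iy_i\in J(R_i)=\m_i$ exactly when the residues satisfy $\bar x_i\bar y_i=1$ in $F_i$, two vertices $x,y$ are adjacent if and only if $\bar x_i\bar y_i=1$ for some $i$; in particular adjacency depends only on the residue vector $\bar x=(\bar x_1,\dots,\bar x_n)$. Consequently $\J_R$ is a balanced blow-up, by $m:=|J(R)|=\prod_i|\m_i|$ copies, of the reduced graph $\bar G$ on the nonzero residue vectors, where a class $\bar x$ becomes a clique if $\bar x_i^2=1$ for some $i$ and an independent set otherwise, while two distinct classes are joined completely whenever they are adjacent. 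I would record at the outset that $\bar G$ is itself (the underlying simple graph of) the Jacobson graph of the product of fields $F_1\oplus\cdots\oplus F_n$, and that $\bar G$ is connected for $n\ge 2$.

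First I would dispose of the case $m\ge 2$, i.e. when some $R_i$ is not a field. Here every class has at least two copies, so a Hamiltonian cycle can be threaded along a spanning connected subgraph (for instance a Hamiltonian path) of $\bar G$: enter a class, run through all its copies, and leave to an adjacent class. The only point requiring care is a class that is an independent set, which must be entered and left through two \emph{distinct} adjacent classes; I would check that in $\bar G$ every class is either a clique or has two distinct neighbouring classes, so that the routing is always possible. This settles every non-local $R$ with a non-field factor, and in particular shows that such rings are never exceptional.

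The heart of the matter is the all-fields case $m=1$, i.e. $R=F_1\oplus\cdots\oplus F_n$, where $\J_R=\bar G$ and there is no blow-up room. Here I would construct the cycle explicitly, exploiting the per-coordinate inverse pairing $c\leftrightarrow c^{-1}$ on each $F_i^{\ast}$ and the cliques $S_j=\{\alpha:\alpha_j=1\}$. The delicate vertices are the ``axis'' vectors $c\,\e_j$ supported in a single coordinate $j$: such a vertex is adjacent only to the value-class $\{\beta:\beta_j=c^{-1}\}$, so its incident edges are essentially forced, and these are exactly the vertices producing the low degrees. The construction would group the vertices by their value in a chosen coordinate, traverse each value-class (which carries a complete bipartite join to the inverse class, and an internal clique when the value is self-inverse), and splice each axis vertex into the traversal of the class it attaches to.

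Finally I would isolate the exception. When $n=2$ and one factor is $\Z_2$, the vertex $(0,1)$ has the single neighbour $(1,1)$, hence is a leaf and lies on no cycle, so $\J_R$ is not Hamiltonian; a Hamiltonian path is then obtained by starting at such a pendant vertex, entering the clique $\{(1,d):d\in F\}$, and inserting each $(0,e)$ immediately after the clique-vertex $(1,e^{-1})$ to which it is joined. The main obstacle is precisely this sparse all-fields regime with many factors of residue field $\Z_2$: there $\J_R$ has many degree-two vertices whose two incident edges are forced, and one must verify that the totality of forced edges assembles into a \emph{single} spanning cycle rather than into several vertex-disjoint cycles, and that the only genuine failure is $\Z_2\oplus F$. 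Showing that the explicit construction closes up without prematurely completing a short cycle is where the real work lies; organising the argument by induction on $n$, with the two-factor products $F_1\oplus F_2$ as the base case, seems the cleanest way to keep this bookkeeping under control.
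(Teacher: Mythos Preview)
Your overall architecture---reduce to the residue-field quotient, build the cycle explicitly there, and identify $\Z_2\oplus F$ as the sole exception---matches the paper's. The paper also uses induction on the number of field factors (its Figure~1 is the inductive step that adds one field to a semi-simple ring already known to be Hamiltonian, and the two-factor and $K\oplus\Z_2\oplus\Z_2$ cases are the explicit base constructions), so your proposed induction with $F_1\oplus F_2$ as base is exactly the route taken.

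Where your plan diverges is in the order of the two reductions, and this is where there is a genuine gap. You want to dispose of $m=|J(R)|\ge 2$ first, using only that $\bar G$ is connected (or has a Hamiltonian path), by ``entering a class, running through all its copies, and leaving''. This works for clique classes but fails for an independent-set class $\bar x$ with $m\ge 2$ copies: the $m$ copies of $\bar x$ are pairwise non-adjacent, so you cannot ``run through'' them, and entering from one neighbour and leaving to another covers exactly one copy, not $m$. Having two distinct neighbouring classes is not enough; to absorb all $m$ copies you would have to bounce repeatedly between $\bar x$ and its neighbours, which consumes copies of the neighbouring classes and breaks the simple threading picture. The paper avoids this by reversing the order: it first establishes a Hamiltonian \emph{cycle} in $\J_{R/J(R)}$, and then the lift to $\J_R$ really is a one-line observation (take $m$ parallel copies of the cycle and splice them using the complete bipartite join between two adjacent classes).

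There is a second consequence of your ordering. When $R/J(R)\cong\Z_2\oplus F$ but $J(R)\neq 0$, the quotient has only a Hamiltonian path, not a cycle, so the easy lift is unavailable and a separate explicit construction is needed (the paper's Figure~9). Your $m\ge 2$ argument was meant to cover this case uniformly, but since that argument is incomplete, this case is currently unhandled in your plan. If you reorganise to do the semi-simple constructions first and then lift, you will still need a bespoke argument here, as the paper provides.
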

\begin{proof}
We first determine all finite semi-simple rings whose Jacobson graphs are Hamiltonian. First suppose that $S$ is a finite semi-simple ring with Hamiltonian Jacobson graph and $F$ is a field. Let $\C$ be a Hamiltonian cycle in $\J_S$ and $\P$ be the path obtained from $\C$ by removing the edge joining two adjacent vertices $x$ and $y$ in $\C$. Also, let $\Q$ be the sequence of elements of $S$ starting at $x$ and ending at $y$. Then $\J_{S\oplus F}$ is Hamiltonian and its Hamiltonian cycle is drawn in Figure 1, in which $F\setminus\{0,\pm1\}=\{a_1^{\pm1},\ldots,a_k^{\pm1}\}$ and $x\oplus y$ denotes the element $(x,y)$ for all $x\in S$ and $y\in F$. Now, we determine all Hamiltonian Jacobson graphs with minimal associated rings. Let $E$ and $F$ be two finite fields such that $|E|\geq|F|$. Let $T$ be an $|E|\times|F|$ grid and label the lattice points of $T\setminus\{(0,0)\}$ by elements of $E\times F$, where the elements of $E$ and $F$ are ordered in the following manner
\[0,a_1,a_1^{-1},\ldots,a_m,a_m^{-1},1,-1\]
and
\[0,b_1,b_1^{-1},\ldots,b_n,b_n^{-1},1,-1,\]
with $-1$ at the ends if $|E|$ and $|F|$ are odd, respectively.

If $|F|\geq4$, then $E\oplus F$ is Hamiltonian with a Hamiltonian cycle given in Figures 2, 3 and 4.

If $|F|=3$, then again $E\oplus F$ is Hamiltonian with a Hamiltonian cycle given in Figures 5 and 6.

Moreover, $K\oplus\Z_2\oplus\Z_2$ is Hamiltonian for every finite field $K$, and a Hamiltonian cycle is given in Figures 7 and 8. Here the elements of $K$ are ordered as same as the elements of $E$ and $F$, and elements of $\Z_2\oplus\Z_2$ is ordered as
\[(0,0),\ (0,1),\ (1,1),\ (1,0).\]

Now let $R=R_1\oplus\cdots\oplus R_n$ be a decomposition of a finite ring $R$ into local rings $R_i$ with associated fields $F_i$, respectively. A simple observation shows that $\J_R$ is Hamiltonian if $\J_{R/J(R)}$ is Hamiltonian. Hence, by previous discussions, $\J_R$ is Hamiltonian whenever $n\geq3$, or $n=2$ and $|F_1|,|F_2|\geq3$. Now suppose that $n=2$ and $|F_1|=2$. We first consider the case where $J(R)\neq0$. Let $\e'_2\in(\e_2+J(R))\setminus\{\e_2\}$ and $Q^\pm$ be the sequence of elements of $\pm\e'_2+J(R)$ starting at $\pm\e_2$ and ending at $\pm\e'_2$. Also, let $\P_i$ be the sequence of alternately chosen elements of $a_i^{-1}\e_2+J(R)$ and $a_i\e_2+J(R)$ starting at $a_i^{-1}\e_2$ and ending at $a_i\e_2$, for $i=1,\ldots,k$, where 
\[(R_2/\m_2)\setminus\{\m_2,\pm1+\m_2\}=\{a_1^{\pm1}+\m_2,\ldots,a_k^{\pm1}+\m_2\}.\]
If $\C$ is a subgraph obtained by removing the edges $\{\e_1+\e_2,\e_1+\e'_2\}$, $\{\e_1-\e_2,\e_1-\e'_2\}$ and $\{\e_1+a_i\e_2,\e_1+a_i^{-1}\e_2\}$ ($i=1,\ldots,k$) from a Hamiltonian cycle of the complete subgraph $\e_1+J(R)+0\oplus R_2$ containing the aforementioned edges, then the Figure 9 gives a Hamiltonian cycle for $R$. Hence $R$ is Hamiltonian. Finally, if $J(R)=0$, then $\J_R$ has a Hamiltonian path, which can be described in the same way as in the previous case. The proof is complete.
\end{proof}
\begin{center}
\begin{tikzpicture}[scale=0.6]
\draw[fill] (78:5cm) circle (2pt);
\node [rotate=78] at (78:6.2cm) {{\tiny $y\oplus a_{k-1}^{-1}$}};

\draw[fill] (66:5cm) circle (2pt);
\node [rotate=66] at (66:6.0cm) {{\tiny $x\oplus a_k$}};

\node [rotate=54] at (54:3cm) {{\tiny $\P\oplus a_k$}};

\draw[fill] (42:5cm) circle (2pt);
\node [rotate=42] at (42:6.0cm) {{\tiny $y\oplus a_k$}};

\draw[fill] (30:5cm) circle (2pt);
\node [rotate=30] at (30:6.0cm) {{\tiny $0\oplus a_k^{-1}$}};

\draw[fill] (18:5cm) circle (2pt);
\node [rotate=18] at (18:6.0cm) {{\tiny $0\oplus a_k$}};

\draw[fill] (6:5cm) circle (2pt);
\node [rotate=6] at (6:6.05cm) {{\tiny $x\oplus a_k^{-1}$}};

\node [rotate=-6] at (-6:3cm) {{\tiny $\P\oplus a_k^{-1}$}};

\draw[fill] (-18:5cm) circle (2pt);
\node [rotate=-18] at (-18:6.05cm) {{\tiny $y\oplus a_k^{-1}$}};

\draw[fill] (-42:5cm) circle (2pt);
\node [rotate=-42] at (-42:6.0cm) {{\tiny $x\oplus -1$}};

\node [rotate=-54] at (-54:3cm) {{\tiny $Q\oplus -1$}};

\draw[fill] (-66:5cm) circle (2pt);
\node [rotate=-66] at (-66:6.0cm) {{\tiny $y\oplus -1$}};

\draw[fill] (-78:5cm) circle (2pt);
\node [rotate=-78] at (-78:6.0cm) {{\tiny $x\oplus 0$}};

\node [rotate=-90] at (-90:3cm) {{\tiny $\P\oplus 0$}};

\draw[fill] (-102:5cm) circle (2pt);
\node [rotate=-102] at (-102:6.0cm) {{\tiny $y\oplus 0$}};

\draw[fill] (-114:5cm) circle (2pt);
\node [rotate=-114] at (-114:6.0cm) {{\tiny $x\oplus 1$}};

\node [rotate=-126] at (-126:3cm) {{\tiny $Q\oplus 1$}};

\draw[fill] (-138:5cm) circle (2pt);
\node [rotate=-138] at (-138:6.0cm) {{\tiny $y\oplus 1$}};

\draw[fill] (-162:5cm) circle (2pt);
\node [rotate=-162] at (-162:6.0cm) {{\tiny $x\oplus a_1$}};

\node [rotate=-174] at (-174:3cm) {{\tiny $\P\oplus a_1$}};

\draw[fill] (-186:5cm) circle (2pt);
\node [rotate=-186] at (-186:6.0cm) {{\tiny $y\oplus a_1$}};

\draw[fill] (-198:5cm) circle (2pt);
\node [rotate=-198] at (-198:6.05cm) {{\tiny $0\oplus a_1^{-1}$}};

\draw[fill] (-210:5cm) circle (2pt);
\node [rotate=-210] at (-210:6.0cm) {{\tiny $0\oplus a_1$}};

\draw[fill] (-222:5cm) circle (2pt);
\node [rotate=-222] at (-222:6.05cm) {{\tiny $x\oplus a_1^{-1}$}};

\node [rotate=-234] at (-234:3cm) {{\tiny $\P\oplus a_1^{-1}$}};

\draw[fill] (-246:5cm) circle (2pt);
\node [rotate=-246] at (-246:6.05cm) {{\tiny $y\oplus a_1^{-1}$}};

\draw[fill] (-258:5cm) circle (2pt);
\node [rotate=-258] at (-258:6.0cm) {{\tiny $x\oplus a_2$}};

\clip (0,0) circle (5cm);

\draw [thick] (78:5cm) arc (78:66:5cm);
\draw [densely dashed] (54:5cm) circle (1.045);
\draw [thick] (42:5cm) arc (42:6:5cm);
\draw [densely dashed] (-6:5cm) circle (1.045);
\draw [thick] (-18:5cm) arc (-18:-42:5cm);
\draw [densely dashed] (-54:5cm) circle (1.045);
\draw [thick] (-66:5cm) arc (-66:-78:5cm);
\draw [thick] (-102:5cm) arc (-102:-114:5cm);
\draw [densely dashed] (-90:5cm) circle (1.045);
\draw [densely dashed] (-126:5cm) circle (1.045);
\draw [thick] (-138:5cm) arc (-138:-162:5cm);
\draw [densely dashed] (-174:5cm) circle (1.045);
\draw [thick] (-186:5cm) arc (-186:-222:5cm);
\draw [densely dashed] (-234:5cm) circle (1.045);
\draw [thick] (-246:5cm) arc (-246:-258:5cm);
\draw [thick,dotted] (102:5cm) arc (102:78:5cm);
\end{tikzpicture}\\\vspace{-0.25cm}
Figure 1. $S\oplus F$
\end{center}
\begin{center}
\begin{tikzpicture}[scale=0.6]
\draw [dotted] (0,0) grid (4,4);
\draw [dotted] (5,0) grid (9,4);
\draw [dotted] (0,5) grid (4,9);
\draw [dotted] (5,5) grid (9,9);

\draw [thick] (4,3)--(3,2)--(2,1)--(1,0)--(2,0)--(1,1)--(0,2)--(0,1)--(1,2)--(2,3)--(3,4);
\draw [thick] (4,1)--(3,0)--(4,0)--(3,1)--(2,2)--(1,3)--(0,4)--(0,3)--(1,4);
\draw [thick] (2,4)--(3,3)--(4,2);

\draw [thick] (5,1)--(6,0)--(5,0)--(6,1)--(7,2)--(8,3)--(8,4);
\draw [thick] (5,3)--(6,2)--(7,1)--(8,0)--(7,0)--(8,1)--(8,2)--(7,3)--(6,4);
\draw [thick] (5,2)--(6,3)--(7,4);
\draw [thick] (9,0)--(9,4);

\draw [thick] (1,5)--(0,6)--(0,5)--(1,6)--(2,7)--(2,8)--(1,8)--(0,7)--(0,8)--(1,7)--(2,6)--(3,5);
\draw [thick] (2,5)--(3,6)--(4,7)--(4,8)--(3,8)--(3,7)--(4,6);
\draw [thick] (0,9)--(4,9);

\draw [thick] (5,6)--(6,7)--(6,8)--(5,8)--(5,7)--(6,6)--(7,5);
\draw [thick] (6,5)--(7,6)--(8,7)--(9,8);
\draw [thick] (8,5)--(8,6)--(7,7)--(7,8)--(8,9)--(9,9);
\draw [thick] (9,5)--(9,6)--(9,7)--(8,8)--(7,9)--(6,9)--(5,9);

\draw [thick] (9,0) to [out=77, in=-77] (9,8);
\draw [thick] (0,9) to [out=13, in=167] (9,9);

\draw [loosely dotted,thick] (4.2,0.5)--(4.8,0.5);
\draw [loosely dotted,thick] (4.2,1.5)--(4.8,1.5);
\draw [loosely dotted,thick] (4.2,2.5)--(4.8,2.5);
\draw [loosely dotted,thick] (4.2,3.5)--(4.8,3.5);

\draw [loosely dotted,thick] (4.2,5.5)--(4.8,5.5);
\draw [loosely dotted,thick] (4.2,6.5)--(4.8,6.5);
\draw [loosely dotted,thick] (4.2,7.5)--(4.8,7.5);
\draw [loosely dotted,thick] (4.2,8.5)--(4.8,8.5);

\draw [loosely dotted,thick] (0.5,4.2)--(0.5,4.8);
\draw [loosely dotted,thick] (1.5,4.2)--(1.5,4.8);
\draw [loosely dotted,thick] (2.5,4.2)--(2.5,4.8);
\draw [loosely dotted,thick] (3.5,4.2)--(3.5,4.8);

\draw [loosely dotted,thick] (5.5,4.2)--(5.5,4.8);
\draw [loosely dotted,thick] (6.5,4.2)--(6.5,4.8);
\draw [loosely dotted,thick] (7.5,4.2)--(7.5,4.8);
\draw [loosely dotted,thick] (8.5,4.2)--(8.5,4.8);

\draw [loosely dotted,thick] (4.2,4.2)--(4.82,4.82);
\draw [loosely dotted,thick] (4.82,4.2)--(4.2,4.82);
\end{tikzpicture}\\
Figure 2. $E\oplus F$ ($|E|$ even, $|F|$ even)
\end{center}
\begin{center}
\begin{tikzpicture}[scale=0.6]
\draw [dotted] (0,0) grid (4,4);
\draw [dotted] (5,0) grid (9,4);
\draw [dotted] (0,5) grid (4,9);
\draw [dotted] (5,5) grid (9,9);

\draw [thick] (4,3)--(3,2)--(2,1)--(1,0)--(2,0)--(1,1)--(0,2)--(0,1)--(1,2)--(2,3)--(3,4);
\draw [thick] (4,1)--(3,0)--(4,0)--(3,1)--(2,2)--(1,3)--(0,4)--(0,3)--(1,4);
\draw [thick] (2,4)--(3,3)--(4,2);

\draw [thick] (5,1)--(6,0)--(5,0)--(6,1)--(7,2)--(8,3)--(8,4);
\draw [thick] (5,3)--(6,2)--(7,1)--(8,0)--(7,0)--(8,1)--(8,2)--(7,3)--(6,4);
\draw [thick] (5,2)--(6,3)--(7,4);
\draw [thick] (9,0)--(9,4);

\draw [thick] (2,5)--(1,6)--(0,7)--(0,6)--(1,7)--(2,7)--(2,6)--(1,5);
\draw [thick] (3,5)--(4,6)--(4,7)--(3,7)--(3,6)--(4,5);
\draw [thick] (0,8)--(4,8);
\draw [thick] (0,9)--(4,9);

\draw [thick] (5,5)--(6,6)--(6,7)--(5,7)--(5,6)--(6,5);
\draw [thick] (8,5)--(7,6)--(7,7)--(8,9)--(7,9)--(6,9)--(5,9);
\draw [thick] (9,5)--(9,6)--(8,7)--(7,8)--(6,8)--(5,8);
\draw [thick] (7,5)--(8,6)--(9,7);
\draw [thick] (9,9)--(9,8)--(8,8);

\draw [thick] (9,0) to [out=77, in=-77] (9,7);
\draw [thick] (0,9) to [out=13, in=167] (9,9);
\draw [thick] (0,8) to [out=13, in=167] (8,8);

\draw [loosely dotted,thick] (4.2,0.5)--(4.8,0.5);
\draw [loosely dotted,thick] (4.2,1.5)--(4.8,1.5);
\draw [loosely dotted,thick] (4.2,2.5)--(4.8,2.5);
\draw [loosely dotted,thick] (4.2,3.5)--(4.8,3.5);

\draw [loosely dotted,thick] (4.2,5.5)--(4.8,5.5);
\draw [loosely dotted,thick] (4.2,6.5)--(4.8,6.5);
\draw [loosely dotted,thick] (4.2,7.5)--(4.8,7.5);
\draw [loosely dotted,thick] (4.2,8.5)--(4.8,8.5);

\draw [loosely dotted,thick] (0.5,4.2)--(0.5,4.8);
\draw [loosely dotted,thick] (1.5,4.2)--(1.5,4.8);
\draw [loosely dotted,thick] (2.5,4.2)--(2.5,4.8);
\draw [loosely dotted,thick] (3.5,4.2)--(3.5,4.8);

\draw [loosely dotted,thick] (5.5,4.2)--(5.5,4.8);
\draw [loosely dotted,thick] (6.5,4.2)--(6.5,4.8);
\draw [loosely dotted,thick] (7.5,4.2)--(7.5,4.8);
\draw [loosely dotted,thick] (8.5,4.2)--(8.5,4.8);

\draw [loosely dotted,thick] (4.2,4.2)--(4.82,4.82);
\draw [loosely dotted,thick] (4.82,4.2)--(4.2,4.82);
\end{tikzpicture}\\
Figure 3. $E\oplus F$ ($|E|$ even, $|F|$ odd)
\end{center}
\begin{center}
\begin{tikzpicture}[scale=0.6]
\draw [dotted] (0,0) grid (4,4);
\draw [dotted] (5,0) grid (9,4);
\draw [dotted] (0,5) grid (4,9);
\draw [dotted] (5,5) grid (9,9);

\draw [thick] (4,3)--(3,2)--(2,1)--(1,0)--(2,0)--(1,1)--(0,2)--(0,1)--(1,2)--(2,3)--(3,4);
\draw [thick] (4,1)--(3,0)--(4,0)--(3,1)--(2,2)--(1,3)--(0,4)--(0,3)--(1,4);
\draw [thick] (2,4)--(3,3)--(4,2);

\draw [thick] (5,2)--(6,1)--(7,0)--(6,0)--(7,1)--(7,2)--(6,3)--(5,4);
\draw [thick] (5,1)--(6,2)--(7,3)--(7,4);
\draw [thick] (5,3)--(6,4);
\draw [thick] (8,0)--(8,4);
\draw [thick] (9,0)--(9,4);

\draw [thick] (1,5)--(2,6)--(2,7)--(1,7)--(0,6)--(0,7)--(1,6)--(2,5);
\draw [thick] (3,5)--(4,6)--(4,7)--(3,7)--(3,6)--(4,5);
\draw [thick] (0,8)--(4,8);
\draw [thick] (0,9)--(4,9);

\draw [thick] (6,5)--(7,6)--(9,7);
\draw [thick] (7,5)--(6,6)--(6,7)--(7,9)--(5,9);
\draw [thick] (8,5)--(8,6)--(7,7)--(6,8);
\draw [thick] (9,5)--(9,7);
\draw [thick] (5,8)--(6,8);
\draw [thick] (9,8)--(7,8);
\draw [thick] (8,9)--(9,9);
\draw [thick] (5,6)--(5,7);

\draw [thick] (9,0) to [out=77, in=-77] (9,8);
\draw [thick] (0,9) to [out=13, in=167] (9,9);
\draw [thick] (0,8) to [out=13, in=167] (7,8);
\draw [thick] (8,0) to [out=77, in=-77] (8,7);
\draw [thick] (8,7) to [out=77, in=-77] (8,9);

\draw [loosely dotted,thick] (4.2,0.5)--(4.8,0.5);
\draw [loosely dotted,thick] (4.2,1.5)--(4.8,1.5);
\draw [loosely dotted,thick] (4.2,2.5)--(4.8,2.5);
\draw [loosely dotted,thick] (4.2,3.5)--(4.8,3.5);

\draw [loosely dotted,thick] (4.2,5.5)--(4.8,5.5);
\draw [loosely dotted,thick] (4.2,6.5)--(4.8,6.5);
\draw [loosely dotted,thick] (4.2,7.5)--(4.8,7.5);
\draw [loosely dotted,thick] (4.2,8.5)--(4.8,8.5);

\draw [loosely dotted,thick] (0.5,4.2)--(0.5,4.8);
\draw [loosely dotted,thick] (1.5,4.2)--(1.5,4.8);
\draw [loosely dotted,thick] (2.5,4.2)--(2.5,4.8);
\draw [loosely dotted,thick] (3.5,4.2)--(3.5,4.8);

\draw [loosely dotted,thick] (5.5,4.2)--(5.5,4.8);
\draw [loosely dotted,thick] (6.5,4.2)--(6.5,4.8);
\draw [loosely dotted,thick] (7.5,4.2)--(7.5,4.8);
\draw [loosely dotted,thick] (8.5,4.2)--(8.5,4.8);

\draw [loosely dotted,thick] (4.2,4.2)--(4.82,4.82);
\draw [loosely dotted,thick] (4.82,4.2)--(4.2,4.82);
\end{tikzpicture}\\
Figure 4. $E\oplus F$ ($|E|$ odd, $|F|$ odd)
\end{center}
\begin{center}
\begin{tikzpicture}[scale=0.6]
\draw [dotted] (0,0) grid (4,2);
\draw [dotted] (5,0) grid (9,2);

\draw [thick] (4,1)--(3,0)--(4,0)--(3,1)--(2,1)--(1,0)--(2,0)--(1,1)--(0,1);
\draw [thick] (5,1)--(6,0)--(5,0)--(6,1)--(7,1)--(8,0)--(7,0)--(8,1)--(7,2)--(5,2);
\draw [thick] (0,2)--(4,2);
\draw [thick] (8,2)--(9,2);
\draw [thick] (9,0)--(9,1);

\draw [thick] (0,1) to [out=13, in=167] (9,1);
\draw [thick] (0,2) to [out=13, in=167] (8,2);
\draw [thick] (9,0) to [out=70, in=-70] (9,2);

\draw [loosely dotted,thick] (4.2,0.5)--(4.8,0.5);
\draw [loosely dotted,thick] (4.2,1.5)--(4.8,1.5);
\end{tikzpicture}\\
Figure 5. $E\oplus\Z_3$ ($|E|$ even)
\end{center}
\begin{center}
\begin{tikzpicture}[scale=0.6]
\draw [dotted] (0,0) grid (4,2);
\draw [dotted] (5,0) grid (9,2);

\draw [thick] (4,1)--(3,0)--(4,0)--(3,1)--(2,1)--(1,0)--(2,0)--(1,1)--(0,1);
\draw [thick] (5,1)--(6,1)--(7,0)--(6,0)--(7,1)--(8,1)--(8,0);
\draw [thick] (9,0)--(9,1);
\draw [thick] (0,2)--(4,2);
\draw [thick] (5,2)--(8,2);

\draw [thick] (0,1) to [out=13, in=167] (9,1);
\draw [thick] (0,2) to [out=13, in=167] (9,2);
\draw [thick] (8,0) to [out=70, in=-70] (8,2);
\draw [thick] (9,0) to [out=70, in=-70] (9,2);

\draw [loosely dotted,thick] (4.2,0.5)--(4.8,0.5);
\draw [loosely dotted,thick] (4.2,1.5)--(4.8,1.5);
\end{tikzpicture}\\
Figure 6. $E\oplus\Z_3$ ($|E|$ odd)
\end{center}
\begin{center}
\begin{tikzpicture}[scale=0.6]
\draw [dotted] (0,0) grid (4,3);
\draw [dotted] (5,0) grid (9,3);

\draw [thick](4,1)--(3,0)--(4,0)--(3,1)--(2,1)--(1,0)--(2,0)--(1,1)--(0,1)--(0,3)--(1,3)--(1,2)--(2,2)--(2,3)--(3,3)--(3,2)--(4,2)--(4,3);
\draw [thick] (5,1)--(6,0)--(5,0)--(6,1)--(7,1)--(8,0)--(7,0)--(8,1)--(9,1)--(9,0);

\draw [thick] (5,3)--(5,2)--(6,2)--(6,3)--(7,3)--(7,2)--(8,2)--(8,3)--(9,3)--(9,2);

\draw [thick] (9,0) to [out=70, in=-70] (9,2);

\draw [loosely dotted,thick] (4.2,0.5)--(4.8,0.5);
\draw [loosely dotted,thick] (4.2,1.5)--(4.8,1.5);
\draw [loosely dotted,thick] (4.2,2.5)--(4.8,2.5);
\end{tikzpicture}\\
Figure 7. $K\oplus\Z_2\oplus\Z_2$ ($|K|$ even)
\end{center}
\begin{center}
\begin{tikzpicture}[scale=0.6]
\draw [dotted] (0,0) grid (4,3);
\draw [dotted] (5,0) grid (9,3);

\draw [thick](4,1)--(3,0)--(4,0)--(3,1)--(2,1)--(1,0)--(2,0)--(1,1)--(0,1)--(0,3)--(1,3)--(1,2)--(2,2)--(2,3)--(3,3)--(3,2)--(4,2)--(4,3);
\draw [thick] (5,1)--(6,1)--(7,0)--(6,0)--(7,1)--(8,1)--(8,0);
\draw [thick] (8,2)--(9,2)--(9,0);
\draw [thick] (9,3)--(7,3)--(7,2)--(6,2)--(6,3)--(5,3)--(5,2);

\draw [thick] (8,0) to [out=70, in=-70] (8,2);
\draw [thick] (9,0) to [out=70, in=-70] (9,3);

\draw [loosely dotted,thick] (4.2,0.5)--(4.8,0.5);
\draw [loosely dotted,thick] (4.2,1.5)--(4.8,1.5);
\draw [loosely dotted,thick] (4.2,2.5)--(4.8,2.5);
\end{tikzpicture}\\
Figure 8. $K\oplus\Z_2\oplus\Z_2$ ($|K|$ odd)
\end{center}
\begin{center}
\begin{tikzpicture}[scale=0.6]
\draw [fill] (65:4cm) circle (2pt);
\node [rotate=65] at (65:2.5cm) {{\tiny $\e_1+a_{k-1}^{-1}\e_2$}};
\draw [fill] (50:4cm) circle (2pt);
\node [rotate=50] at (50:2.8cm) {{\tiny $\e_1+a_k\e_2$}};
\draw [fill] (50:5cm) circle (2pt);
\node [rotate=50-90] at (60:5cm) {{\tiny $a_k^{-1}\e_2$}};
\draw [fill] (20:6cm) circle (2pt);
\node [rotate=20] at (20:2.7cm) {{\tiny $\e_1+a_k^{-1}\e_2$}};
\draw [fill] (20:4cm) circle (2pt);
\node [rotate=20-90] at (12:6cm) {{\tiny $a_k\e_2$}};
\draw [fill] (-55:4cm) circle (2pt);
\node [rotate=-55] at (-55:3cm) {{\tiny $\e_1-\e_2$}};
\draw [fill] (-55:4.5cm) circle (2pt);
\node [rotate=-55] at (-55:5.1cm) {{\tiny $-\e_2$}};
\draw [fill] (-65:4.5cm) circle (2pt);
\node [rotate=-65] at (-65:3cm) {{\tiny $\e_1-\e'_2$}};
\draw [fill] (-65:4cm) circle (2pt);
\node [rotate=-65] at (-65:5.1cm) {{\tiny $-\e'_2$}};
\draw [fill] (-115:4cm) circle (2pt);
\node [rotate=-115] at (-115:5.1cm) {{\tiny $\e_2$}};
\draw [fill] (-115:4.5cm) circle (2pt);
\node [rotate=-115] at (-115:3cm) {{\tiny $\e_1+\e_2$}};
\draw [fill] (-125:4.5cm) circle (2pt);
\node [rotate=-125] at (-125:5.1cm) {{\tiny $\e'_2$}};
\draw [fill] (-125:4cm) circle (2pt);
\node [rotate=-125] at (-125:3cm) {{\tiny $\e_1+\e'_2$}};
\draw [fill] (160:4cm) circle (2pt);
\node [rotate=160] at (160:2.8cm) {{\tiny $\e_1+a_1\e_2$}};
\draw [fill] (160:6cm) circle (2pt);
\node [rotate=160-90] at (168:6cm) {{\tiny $a_1^{-1}\e_2$}};
\draw [fill] (130:5cm) circle (2pt);
\node [rotate=130] at (130:2.7cm) {{\tiny $\e_1+a_1^{-1}\e_2$}};
\draw [fill] (130:4cm) circle (2pt);
\node [rotate=130-90] at (122:5	cm) {{\tiny $a_1\e_2$}};
\draw [fill] (115:4cm) circle (2pt);
\node [rotate=115] at (115:2.8cm) {{\tiny $\e_1+a_2\e_2$}};
\node [rotate=0] at (0:0cm) {{\tiny $\C$}};

\draw [dashed] (-65:4cm) arc (-65:-115:4cm);

\draw [thick] (-65:4cm)--(-65:4.5cm);
\draw [thick] (-55:4cm)--(-55:4.5cm);
\draw [dashed] (-55:4.5) arc (96.9085:-216.9085:1cm);
\node [rotate=-55] at (-60:7cm) {{\tiny $\Q^-$}};

\draw [thick] (-115:4cm)--(-115:4.5cm);
\draw [thick] (-125:4cm)--(-125:4.5cm);
\draw [dashed] (-115:4.5) arc (36.9085:-276.9085:1cm);
\node [rotate=-120] at (-120:7cm) {{\tiny $\Q^+$}};

\draw [help lines,dash pattern=on 1pt off 1pt on 2pt off 1pt,rotate=20] (0:5.5cm) ellipse  (1cm and 0.65cm);
\draw [help lines,dash pattern=on 1pt off 1pt on 2pt off 1pt,rotate=50] (0:5.5cm) ellipse  (1cm and 0.65cm);
\draw [dashed] (-55:4cm) arc (-55:20:4cm);
\draw [thick] (20:4cm)--(20:6cm);
\draw [thick] (50:4cm)--(50:5cm);
\draw [thick] (50:4cm) arc (50:65:4cm);

\draw [help lines,dash pattern=on 1pt off 1pt on 2pt off 1pt,rotate=130] (0:5.5cm) ellipse  (1cm and 0.65cm);
\draw [help lines,dash pattern=on 1pt off 1pt on 2pt off 1pt,rotate=160] (0:5.5cm) ellipse  (1cm and 0.65cm);
\draw [dashed] (235:4cm) arc (235:160:4cm);
\draw [thick] (160:4cm)--(160:6cm);
\draw [thick] (130:4cm)--(130:5cm);
\draw [thick] (130:4cm) arc (130:115:4cm);

\draw [thick,dotted] (65:4cm) arc (65:115:4cm);

\draw [dashed] (20:6cm)--(47:5.9cm)--(23:5cm)--(50:5cm);
\node [rotate=35] at (34:6.3cm) {{\tiny $\P_k$}};
\draw [dashed] (160:6cm)--(133:5.9cm)--(157:5cm)--(130:5cm);
\node [rotate=145] at (145:6.3cm) {{\tiny $\P_1$}};
\end{tikzpicture}\\
Figure 9. $R_1\oplus R_2$ ($J(R)\neq0$)
\end{center}

Utilizing Theorem \ref{Hamiltonian}, we can prove a stronger result on the cycles of Jacobson graphs. Recall that a graph $G$ is \textit{pancyclic} if it contains cycles of lengths $l$ for each $3\leq l\leq|V(G)|$.
\begin{theorem}\label{pancyclic}
Let $R$ be a finite non-local ring. If $R\not\cong\Z_2\oplus F$ ($F$ field), then $\J_R$ is pancyclic.
\end{theorem}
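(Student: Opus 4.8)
The plan is to promote the proof of Theorem~\ref{Hamiltonian} from ``Hamiltonian'' to ``pancyclic.'' That theorem already supplies a cycle of the maximal length $m:=|V(\J_R)|=|R|-|J(R)|$, and since $R$ is non-local and $R\not\cong\Z_2\oplus\Z_2$ its girth is $3$, so a triangle gives the length $3$. Everything therefore reduces to producing cycles of each intermediate length $4\le\ell\le m-1$, and I would organise this along the same case division as Theorem~\ref{Hamiltonian} ($n\ge3$; $n=2$ with both fields $\ge3$; $n=2$ with one field $\Z_2$).

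First I would record the ``blow-up'' observation that drives both the construction and the length bookkeeping: since $1-x_iy_i\in J(R_i)$ depends only on the images of $x_i,y_i$ in the residue field $F_i$, adjacency in $\J_R$ depends only on the images in $R/J(R)=F_1\oplus\cdots\oplus F_n$. Hence each coset of $J(R)$ is a ``fibre'' of size $t:=|J(R)|$, which is a clique when its residue tuple has a coordinate equal to $\pm1$ and an independent set otherwise, while two fibres lying over adjacent vertices of $\J_{R/J(R)}$ are completely joined. Thus $\J_R$ is a lexicographic-type expansion of $\J_{R/J(R)}$ by $t$. The use of this picture is that, having fixed a cycle of $\J_{R/J(R)}$ of length $r$, the complete bipartite joins between consecutive fibres let me distribute between $1$ and $t$ chosen vertices along it and reroute into a cycle of $\J_R$ of any length in $[r,rt]$ (the smallest lengths $\ell\le 3t$ being produced from a triangle of $\J_{R/J(R)}$, whose three fibres form a pancyclic $K_{t,t,t}$-type graph). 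Running over all cycle lengths $3,\dots,N$ present in $\J_{R/J(R)}$ (where $N=|V(\J_{R/J(R)})|$) these ranges overlap and realise every $\ell\in[3,tN]=[3,m]$. This is the reduction ``$\J_{R/J(R)}$ pancyclic $\Rightarrow\J_R$ pancyclic,'' the analogue of the reduction used in Theorem~\ref{Hamiltonian}, and it disposes in one stroke of every $R$ with $R/J(R)\not\cong\Z_2\oplus(\text{field})$.

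It then remains to prove pancyclicity for semisimple $S=F_1\oplus\cdots\oplus F_n$ with $S\not\cong\Z_2\oplus F$, and to treat separately those $R$ for which $R/J(R)\cong\Z_2\oplus(\text{field})$ while $R\not\cong\Z_2\oplus(\text{field})$ (for instance $\Z_2\oplus\Z_4$), exactly the cases singled out in the last paragraph of Theorem~\ref{Hamiltonian}. For the semisimple part I would mirror the ``adding a field'' step and show $\J_{S\oplus F}$ is pancyclic whenever $\J_S$ is. The clean device is the following elementary lemma, which I would isolate and prove first: if a graph $G$ on $m$ vertices contains a clique $W$ such that $G[(V\setminus W)\cup\{a,b\}]$ has a Hamiltonian path between two vertices $a,b\in W$, then $G$ is pancyclic provided $2|W|\ge m+1$. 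Indeed, the clique yields every length in $[3,|W|]$, while closing the fixed Hamiltonian path back to the clique through $k$ of its vertices yields every length in $[m-|W|+2,m]$; the two ranges meet precisely when $2|W|\ge m+1$. In $\J_{S\oplus F}$ the slice over $1\in F$ is a clique of size $|S|$ and the remaining slices carry a blow-up of $\J_S$, which makes the lemma applicable; the base cases $E\oplus F$ ($|E|\ge|F|\ge3$) and $K\oplus\Z_2\oplus\Z_2$ are then checked by exhibiting the relevant clique inside the explicit cycles of Figures~2--8.

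The main obstacle, as always with pancyclicity, is closing the intermediate lengths with no gaps in the mixed regimes. The lemma above applies cleanly only when a single clique already occupies more than half the vertices, which happens when the smallest residue field is $\Z_2$ (there $W=\{x:x_1\in 1+\m_1\}$ has size $|R|/2\ge(m+1)/2$, and its complement is a coordinate-collapsed copy of $\J_{R_2\oplus\cdots\oplus R_n}$ that carries the required Hamiltonian path). When instead every $|F_i|\ge3$ the largest cliques have size only about $|R|/3$, and I expect to need a complete multipartite ``core'' (three mutually unit-paired residue classes giving a large pancyclic $K_{t,t,t}$-type subgraph) together with the blow-up padding to bridge the gap between the clique regime and the near-Hamiltonian regime. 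The other delicate point is checking that the exclusion is sharp: for $R\cong\Z_2\oplus F$ the complement of the natural clique is a disjoint union of matchings and small cliques and carries no Hamiltonian path, so the lemma genuinely fails there, consistently with $\J_{\Z_2\oplus F}$ admitting only a Hamiltonian path. Verifying that the clique, respectively the multipartite core, is large enough and correctly attached in every remaining case is where the bulk of the routine-but-careful work will lie.
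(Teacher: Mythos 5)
Your skeleton matches the paper's (reduce modulo $J(R)$, induct by adjoining a direct summand, treat the rings with $R/J(R)\cong\Z_2\oplus F$ but $J(R)\neq0$ separately), and your clique-plus-path lemma is a correct and reasonably clean device. But two of the load-bearing steps are not actually established, and one of them is wrong as stated.

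First, the blow-up reduction overclaims. You assert that a cycle of length $r$ in $\J_{R/J(R)}$ can be rerouted through $1$ to $t$ vertices per fibre to give cycles of every length in $[r,rt]$. A fibre over a residue tuple with no coordinate congruent to $\pm1$ is an \emph{independent} set, and a cycle blown up by independent parts of unequal sizes need not be Hamiltonian: already $C_4$ with parts of sizes $t,1,1,1$ has no Hamiltonian cycle for $t\geq2$, since each vertex of the big part has degree $2$ and both its neighbours would be forced to have degree $\geq t$ in the cycle. Concretely, for $R=\Z_9\oplus\Z_5$ the fibre over $(0,2)$ is independent, and inflating only the clique fibres of a Hamiltonian cycle of $\J_{R/J(R)}$ leaves a large interval of lengths below $|V(\J_R)|$ unreached; closing that interval requires edges of $\J_R$ that are not present in the blown-up cycle (this is exactly what the paper's appeal to the clique $1+J(R)$ and \cite[Lemma 2.1]{aa-ae-mfdg} is doing, however tersely). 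Second, and more seriously, the semisimple base cases in which every residue field has at least $3$ elements are precisely where your lemma fails ($2|W|\geq m+1$ forces a residue field $\Z_2$), and your proposal stops at ``I expect to need a complete multipartite core \dots the bulk of the routine-but-careful work will lie'' there. That unperformed work is the actual content of the theorem: the paper settles it by explicit constructions, starting from the Hamiltonian cycles of Figures 2--8 and filling in all intermediate lengths with local edge substitutions inside the $|E|\times|F|$ grids together with the induced copy of $\J_{E\oplus\Z_3}$. Until the independent-fibre issue is repaired and the all-fields-$\geq3$ cases are carried out, the proposal is an outline rather than a proof.
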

\begin{proof}
If $R=S\oplus F$, where $S$ is a finite semi-simple ring with Hamiltonian Jacobson graph and $F$ is a finite field, then by adding suitable edges of types
\[\{x\oplus a_i^{\pm1},y\oplus a_i^{\pm1}\},\ \{x\oplus a_i^{\pm1},y\oplus a_i^{\mp1}\},\ \{x\oplus a_i^{\pm1},y\oplus1\},\ \{x\oplus1,y\oplus a_i^{\pm1}\}\]
to the cycle in Figure 1, and removing some vertices from the complete subgraph $Q\oplus1$ if necessary, we can construct cycles of arbitrary lengths $l$, for $3\leq l\leq|V(\J_R)|$. Hence $\J_R$ is pancyclic.

If $R=E\oplus F$ ($|E|,|F|>2$) or $E\oplus\Z_2\oplus\Z_2$, where $E$ and $F$ are finite fields, then by starting with Hamiltonian cycles given in Figures 2, 3, 4, 5, 6, 7 and 8, and complementing paths of length two with vertices in the same row or column, or by applying the following substitutions of edges
\begin{center}
\begin{tikzpicture}[scale=0.6]
\draw [dotted] (0,0) grid (1,1);
\draw [dotted] (3,0) grid (4,1);

\draw [dotted] (7,0) grid (8,1);
\draw [dotted] (10,0) grid (11,1);

\draw [dotted] (0,2) grid (1,3);
\draw [dotted] (3,2) grid (4,3);

\draw [dotted] (7,2) grid (8,3);
\draw [dotted] (10,2) grid (11,3);

\draw [dotted] (0,4) grid (1,5);
\draw [dotted] (3,4) grid (4,5);

\draw [dotted] (7,4) grid (8,5);
\draw [dotted] (10,4) grid (11,5);

\draw [dotted] (0,6) grid (1,7);
\draw [dotted] (3,6) grid (4,7);

\draw [dotted] (7,6) grid (8,7);
\draw [dotted] (10,6) grid (11,7);

\draw [thick] (0,0)--(1,1)--(1,0)--(0,1);
\draw [->] (1.5,0.5)--(2.5,0.5);
\draw [thick] (3,0)--(3,1);

\draw [thick] (8,0)--(7,0)--(7,1);
\draw [->] (8.5,0.5)--(9.5,0.5);
\draw [thick] (10,1)--(11,0);

\draw [thick] (0,3)--(1,2)--(0,2)--(1,3);
\draw [->] (1.5,2.5)--(2.5,2.5);
\draw [thick] (3,3)--(4,3);

\draw [thick] (7,3)--(7,2)--(8,2)--(8,3);
\draw [->] (8.5,2.5)--(9.5,2.5);
\draw [thick] (10,3)--(11,3);

\draw [thick] (1,5)--(0,4)--(0,5)--(1,4);
\draw [->] (1.5,4.5)--(2.5,4.5);
\draw [thick] (4,4)--(4,5);

\draw [thick] (8,4)--(7,4)--(7,5)--(8,5);
\draw [->] (8.5,4.5)--(9.5,4.5);
\draw [thick] (11,4)--(11,5);

\draw [thick] (0,6)--(1,7)--(0,7)--(1,6);
\draw [->] (1.5,6.5)--(2.5,6.5);
\draw [thick] (3,6)--(4,6);

\draw [thick] (7,6)--(7,7)--(8,7)--(8,6);
\draw [->] (8.5,6.5)--(9.5,6.5);
\draw [thick] (10,6)--(11,6);
\end{tikzpicture}
\end{center}
in conjunction with the fact that $\gr(\J_R)=3$ and $\J_R$ has a subgraph induced by $E\oplus\{0,\pm\e_2\}\setminus\{0\}$ isomorphic to $\J_{E\oplus\Z_3}$, we can obtain cycles of arbitrary lengths $l$, for all $3\leq l\leq|V(\J_R)|$. Hence $\J_R$ is pancyclic.

Now, let $R$ be a finite non-local ring such that $R\not\cong\Z_2\oplus F$ ($F$ field). First, suppose that $R/J(R)$ is Hamiltonian. Then, since every cycle $\C$ in $\J_{R/J(R)}$ induces a cycle in $\J_R$ by considering representatives of elements of $\C$, $\J_R$ contains cycles of every length $l$ with $3\leq l\leq|V(\J_{R/J(R)})|$. To show that $\J_R$ is pancyclic, let $\C$ be a Hamiltonian cycle in $\J_{R/J(R)}$ and $\C'$ be the set of representatives of elements of $\C$. Then, by \cite[Lemma 2.1]{aa-ae-mfdg} and the fact that $\J_R$ has a complete subgraph $1+J(R)$, one can use $\C'$ to construct cycles of every length $l$, for $|V(\J_{R/J(R)})|<l\leq|V(\J_R)|$. Therefore $\J_R$ is pancyclic.

Finally, suppose that $R/J(R)$ is not Hamiltonian. Thus $R/J(R)\cong\Z_2\oplus F$ such that $F$ is a finite field and $J(R)\neq0$. But then, by Figure 9, $\J_R$ is pancyclic and the proof is complete.
\end{proof}
\begin{corollary}
A Jacobson graph is Hamiltonian if and only if it is pancyclic.
\end{corollary}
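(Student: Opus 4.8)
The plan is to prove the two implications separately, the backward one being essentially immediate and the forward one reducing to a short case analysis over local versus non-local rings. For the backward direction, suppose $\J_R$ is pancyclic. By definition of pancyclicity, $\J_R$ contains a cycle of length $l$ for every $3\leq l\leq|V(\J_R)|$, and in particular a cycle of length $|V(\J_R)|$; such a cycle visits every vertex and is therefore a Hamiltonian cycle. Hence $\J_R$ is Hamiltonian. This direction uses nothing beyond the top length $l=|V(\J_R)|$ being included in the definition.

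For the forward direction, assume $\J_R$ is Hamiltonian; note that a Hamiltonian graph is necessarily connected and has at least three vertices. First I would treat the non-local case. If $R$ is non-local, then Theorem \ref{Hamiltonian} asserts that $\J_R$ is Hamiltonian except precisely when $R\cong\Z_2\oplus F$ with $F$ a field, in which case $\J_R$ admits only a Hamiltonian path and fails to be Hamiltonian. Since $\J_R$ is assumed Hamiltonian, this exceptional case is excluded, so $R\not\cong\Z_2\oplus F$, and Theorem \ref{pancyclic} applies directly to yield that $\J_R$ is pancyclic.

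It remains to treat the local case, and this is the point requiring the most care, because Theorems \ref{Hamiltonian} and \ref{pancyclic} both assume $R$ is non-local and so say nothing here. Suppose $R$ is local with maximal ideal $\m$ and residue field $F$. Here I would invoke the structural description of $\J_R$ recalled from \cite{aa-ae-mfdg} in the introduction: for a finite local ring, $\J_R$ is a disjoint union of at most two copies of $K_{|\m|}$ together with some copies of $K_{|\m|,|\m|}$, and it is connected only when $|F|=2$, in which case $\J_R\cong K_{|\m|}$. Since $\J_R$ is Hamiltonian it is connected, which forces $|F|=2$ and $\J_R\cong K_{|\m|}$; Hamiltonicity then forces $|\m|\geq3$. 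A complete graph on at least three vertices is pancyclic, so $\J_R$ is pancyclic, completing the proof.

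The main obstacle is precisely the local case: one must rule out the possibility that a Hamiltonian $\J_R$ is bipartite (for instance a single $K_{|\m|,|\m|}$, which would be Hamiltonian yet not pancyclic, and would contradict the corollary). The key observation defusing this is that the coset $1+\m$ always constitutes a clique component $K_{|\m|}$ of $\J_R$, so mere connectedness already forces $F\cong\Z_2$ and hence $\J_R$ complete; this is what excludes any non-pancyclic Hamiltonian example arising from a local ring and makes the case analysis exhaustive.
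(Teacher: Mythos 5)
Your proposal is correct and matches the paper's intent: the corollary is stated there without proof as an immediate consequence of Theorems \ref{Hamiltonian} and \ref{pancyclic}, which is exactly your forward direction for non-local rings, and the trivial backward direction. Your careful handling of the local case (connectedness forces $|F|=2$ via the clique component $1+\m$, so $\J_R$ is complete and hence pancyclic once Hamiltonian) is a worthwhile explicit addition, but it is just the discussion the paper already gives at the opening of Section 2, so the approach is essentially the same.
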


In spite of the fact that almost all Jacobson graphs are Hamiltonian, Eulerian Jacobson graphs are few. To show this, we first obtain the degree of a given vertex in a Jacobson graph.
\begin{lemma}\label{degree}
Let $R=R_1\oplus\cdots\oplus R_n$ be the decomposition of the finite ring $R$ into local rings $R_i$ with associated fields $F_i$, respectively. If $x=(x_1,\ldots,x_n)\in V(\J_R)$, then
\[\deg(x)=|R|\left(1-\prod_{x_i\notin J(R_i)}\left(1-\frac{1}{|F_i|}\right)\right)-\varepsilon_x,\]
where $\varepsilon_x=1$ if $x\in\prod_{i=1}^n(J(R_i)+\{0,\pm1\})$ and $\varepsilon_x=0$ otherwise.
\end{lemma}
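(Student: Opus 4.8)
The plan is to count the non-neighbours of $x$ and pass to the complement, using that adjacency is decided coordinatewise. Concretely, a vertex $y=(y_1,\dots,y_n)$ is \emph{not} adjacent to $x$ exactly when $1-x_iy_i\in U(R_i)$ for every $i$. Since $R=R_1\oplus\cdots\oplus R_n$, the number of $y\in R$ meeting this conjunction splits as a product over the coordinates, so the problem reduces to evaluating the local quantity $c_i:=\#\{y_i\in R_i:\ 1-x_iy_i\in U(R_i)\}$ for each $i$.

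For the local counts I would split on whether $x_i\in J(R_i)$. If $x_i\in J(R_i)$, then $x_iy_i\in J(R_i)$ for all $y_i$, so $1-x_iy_i\in 1+J(R_i)\subseteq U(R_i)$ and hence $c_i=|R_i|$. If $x_i\notin J(R_i)$, then $x_i$ is a unit of the local ring $R_i$, so $y_i\mapsto x_iy_i$ permutes $R_i$; the forbidden values are exactly those with $x_iy_i\in 1+J(R_i)$, of which there are $|J(R_i)|=|R_i|/|F_i|$, whence $c_i=|R_i|-|R_i|/|F_i|=|R_i|\bigl(1-\frac{1}{|F_i|}\bigr)$. Multiplying the $c_i$ and using $\prod_i|R_i|=|R|$, the number of $y\in R$ with no adjacency to $x$ equals $|R|\prod_{x_i\notin J(R_i)}\bigl(1-\frac{1}{|F_i|}\bigr)$; subtracting from $|R|$ produces the main term $|R|\bigl(1-\prod_{x_i\notin J(R_i)}(1-\frac{1}{|F_i|})\bigr)$.

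It remains to convert this count of $y\in R$ into the true degree, which ranges only over $y\in V(\J_R)=R\setminus J(R)$ with $y\neq x$. Two adjustments must be checked. First, no element of $J(R)$ is ever adjacent to $x$: if $y\in J(R)$ then $x_iy_i\in J(R_i)$ and thus $1-x_iy_i\in U(R_i)$ for every $i$, so deleting $J(R)$ removes nothing from the main count and the restriction to vertices is free here. Second, and this is the one delicate point, the element $x$ itself is included in the main count precisely when $1-x^2\notin U(R)$, i.e. when $1-x_i^2\notin U(R_i)$ for some $i$, equivalently $\overline{x_i}=\pm1$ in $F_i$ (that is, $x_i\in\pm1+J(R_i)$) for some $i$. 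This indicator is exactly $\varepsilon_x$, and removing the single spurious self-contribution yields $\deg(x)$ as stated. I expect the self-term bookkeeping to be the only genuine obstacle: the main term is a clean product computation, whereas getting $\varepsilon_x$ right requires pinning down exactly which $x$ satisfy $1-x^2\notin U(R)$ and confirming that this is the \emph{sole} correction once $J(R)$ has been excluded.
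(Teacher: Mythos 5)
Your counting strategy is sound, and it is simply the complement of the paper's own argument: the paper sets $A_i=\{y\in V(\J_R):y_i\in x_i^{-1}+J(R_i)\}$ for the indices $i$ with $x_i\notin J(R_i)$, computes $\left|\bigcup_i A_i\right|$ by inclusion--exclusion, and then subtracts a self-adjacency correction; your product formula for the non-neighbours is the same computation read off the complementary set, and your local counts $c_i$ are correct. The step that does not go through is the very last identification. What you correctly establish is that $x$ must be discarded from the main count exactly when $1-x^2\notin U(R)$, i.e.\ when $x_i\in\pm1+J(R_i)$ for \emph{some} $i$. But the $\varepsilon_x$ in the statement is defined by the condition $x\in\prod_{i=1}^n(J(R_i)+\{0,\pm1\})$, which demands $x_i\in J(R_i)+\{0,\pm1\}$ for \emph{every} $i$. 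These conditions are not equivalent: the universal one implies the existential one (since $x\notin J(R)$ forces some coordinate to lie in $\pm1+J(R_i)$), but not conversely. For instance, in $R=\Z_5\oplus\Z_5$ the vertex $x=(1,2)$ satisfies $1-x^2=(0,2)\notin U(R)$, so your (correct) computation gives $\deg(x)=25\left(1-\tfrac{16}{25}\right)-1=8$, which one verifies directly; yet $2\notin J(R_2)+\{0,\pm1\}=\{0,1,4\}$, so the statement's $\varepsilon_x$ equals $0$ and the displayed formula returns $9$. Writing ``this indicator is exactly $\varepsilon_x$'' without verification is therefore the gap, and the asserted equality is in fact false.

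To be clear about where this leaves you: your derivation is the right one, and it proves the lemma with $\varepsilon_x$ replaced by the indicator of ``$x_i\in\pm1+J(R_i)$ for some $i$.'' It is the statement itself (and the paper's own one-line proof, which likewise writes $\deg(x)=\left|\bigcup_i A_i\right|-\varepsilon_x$ without checking that $x\in\bigcup_i A_i$ if and only if $\varepsilon_x=1$) that conflates the two conditions; the discrepancy propagates to Corollary \ref{edges}, whose correction term $|J(R)|(3^{\mathcal{O}}2^{\mathcal{E}}-1)$ is precisely the number of vertices satisfying the universal condition rather than the existential one. If your task is to prove the lemma exactly as stated, you must either establish the equivalence of the two indicator conditions (which fails) or flag the error; as written, the final sentence of your argument papers over exactly the point you yourself identified as ``the one delicate point.''
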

\begin{proof}
Let $X=\{i:x_i\notin J(R_i)\}$ and $A_i=\{y\in V(\J_R):y_i\in x_i^{-1}+J(R_i)\}$ for all $i\in X$. Then for every $i\in X$, $|A_i|=|R|/|F_i|$ and
\[\deg(x)=\left|\bigcup_{i\in X}A_i\right|-\varepsilon_x.\]
Using the inclusion-exclusion principal the result follows easily.
\end{proof}

The following result is an immediate consequence of the above lemma.
\begin{corollary}\label{edges}
Let $R=R_1\oplus\cdots\oplus R_n$ be the decomposition of the finite ring $R$ into local rings $R_i$ with associated fields $F_i$, respectively. Then
\[2|E(\J_R)|=|R|^2\left(1-\prod_{i=1}^n\left(1-\frac{1}{|F_i|}+\frac{1}{|F_i|^2}\right)\right)-|J(R)|(3^\mathcal{O}2^\mathcal{E}-1),\]
where $\mathcal{O}$ and $\mathcal{E}$ denote the number of indices $i$ such that $|F_i|$ is odd and even, respectively.
\end{corollary}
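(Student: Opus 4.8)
The plan is to combine the handshake lemma $2|E(\J_R)|=\sum_{x\in V(\J_R)}\deg(x)$ with the degree formula of Lemma~\ref{degree}, recalling that $V(\J_R)=R\setminus J(R)$. Writing $\deg(x)=f(x)-\varepsilon_x$ with $f(x)=|R|\bigl(1-\prod_{x_i\notin J(R_i)}(1-1/|F_i|)\bigr)$, the sum splits as $\sum_x f(x)-\sum_x\varepsilon_x$, and I would evaluate the two pieces separately.

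For the main sum, the key observation is that $f(x)=0$ whenever $x\in J(R)$, since the product is then over the empty set and equals $1$; hence $\sum_{x\in R\setminus J(R)}f(x)=\sum_{x\in R}f(x)$. Expanding $f$, I reduce to evaluating $\sum_{x\in R}\prod_{x_i\notin J(R_i)}(1-1/|F_i|)$. Writing this product as $\prod_{i=1}^n h_i(x_i)$, where $h_i(x_i)=1-1/|F_i|$ if $x_i\notin J(R_i)$ and $h_i(x_i)=1$ otherwise, the sum over $R=R_1\oplus\cdots\oplus R_n$ factorizes as $\prod_{i=1}^n\sum_{x_i\in R_i}h_i(x_i)$. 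Since $|J(R_i)|=|R_i|/|F_i|$, a direct count gives $\sum_{x_i\in R_i}h_i(x_i)=|R_i|(1-1/|F_i|+1/|F_i|^2)$, and multiplying over $i$ (using $\prod_i|R_i|=|R|$) yields $|R|\prod_i(1-1/|F_i|+1/|F_i|^2)$. Therefore $\sum_{x\in R}f(x)=|R|^2\bigl(1-\prod_i(1-1/|F_i|+1/|F_i|^2)\bigr)$, which is exactly the first term of the claimed formula.

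For the correction term, I must count $\sum_{x\in R\setminus J(R)}\varepsilon_x=|T\cap(R\setminus J(R))|$, where $T=\prod_{i=1}^n(J(R_i)+\{0,\pm1\})$. Since $0\in\{0,\pm1\}$ we have $J(R)\subseteq T$, so this count equals $|T|-|J(R)|$, and $|T|=\prod_i|J(R_i)+\{0,\pm1\}|$. The main point here is to determine the size of each local factor: the cosets $J(R_i)$, $1+J(R_i)$, $-1+J(R_i)$ correspond to $0,1,-1\in F_i$, which are three distinct elements when $|F_i|$ is odd and only two (because $1=-1$) when $|F_i|$ is even. Thus $|J(R_i)+\{0,\pm1\}|$ equals $3|J(R_i)|$ or $2|J(R_i)|$ accordingly, so $|T|=3^{\mathcal{O}}2^{\mathcal{E}}\prod_i|J(R_i)|=3^{\mathcal{O}}2^{\mathcal{E}}|J(R)|$ and the correction term is $|J(R)|(3^{\mathcal{O}}2^{\mathcal{E}}-1)$. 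Subtracting this from the main sum gives the stated identity.

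I expect the only genuinely delicate step to be the parity case-analysis for the local factors of $T$, namely recognizing that $1+J(R_i)$ and $-1+J(R_i)$ coincide precisely when the residue field $F_i$ has characteristic $2$ (equivalently, when $|F_i|$ is even); everything else is the handshake lemma together with the multiplicativity of the sum over the direct product $R_1\oplus\cdots\oplus R_n$.
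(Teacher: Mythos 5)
Your proposal is correct and is essentially the argument the paper intends: the corollary is stated as an immediate consequence of Lemma \ref{degree}, obtained exactly as you do by summing the degree formula over $V(\J_R)=R\setminus J(R)$ via the handshake lemma, factorizing the product over the direct sum, and counting $\prod_i(J(R_i)+\{0,\pm1\})$ by the parity case-analysis on the residue fields. All the individual computations (in particular $\sum_{x_i\in R_i}h_i(x_i)=|R_i|\left(1-\tfrac{1}{|F_i|}+\tfrac{1}{|F_i|^2}\right)$ and $|T|=3^{\mathcal{O}}2^{\mathcal{E}}|J(R)|$) check out.
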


Now we obtain all finite rings with an Eulerian tour or trail.
\begin{theorem}\label{Eulerian}
Let $R$ be a finite ring. Then $\J_R$ is Eulerian if and only if $R\cong\Z_2$, or $|R|$ is odd and $R/J(R)\cong\Z_3\oplus\cdots\oplus\Z_3$ is a direct sum of at least two copies of $\Z_3$.
\end{theorem}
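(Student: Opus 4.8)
The plan is to invoke the classical criterion that a finite graph is Eulerian exactly when it is connected and every vertex has even degree, and to feed this into the degree formula of Lemma~\ref{degree}. Connectivity is already handled by the structural facts recalled in the introduction: a local ring has connected Jacobson graph only when its residue field is $\Z_2$ (in which case $\J_R$ is complete), whereas every non-local $\J_R$ is connected (even $\Z_2\oplus\Z_2$, which is a path). So after settling connectivity the whole question becomes a parity computation for the degrees, carried out separately in the local case, the odd non-local case, and the even non-local case.

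First I would dispose of the local case. If $(R,\m)$ is local with $R/\m\cong\Z_2$, then $\J_R$ is complete on $R\setminus\m$, which has $|R|/2=2^{s-1}$ vertices where $|R|=2^s$. Since $K_m$ is Eulerian iff $m$ is odd, this forces $2^{s-1}$ to be odd, i.e. $s=1$ and $R\cong\Z_2$ (whose one-vertex graph is vacuously Eulerian). Local rings with larger residue field are disconnected, hence not Eulerian.

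For a non-local $R=R_1\oplus\cdots\oplus R_n$ with $n\ge2$ I would first rewrite Lemma~\ref{degree}, using $|R_i|-|J(R_i)|=|U(R_i)|=|J(R_i)|(|F_i|-1)$, in the product form $\deg(x)=|R|-\prod_{i\in S}|U(R_i)|\prod_{i\notin S}|R_i|-\varepsilon_x$, where $S=\{i:x_i\notin J(R_i)\}$ is the unit-support of $x$. When $|R|$ is odd every $|F_i|$ is odd, so each $|U(R_i)|=|J(R_i)|(|F_i|-1)$ is even; since a vertex has $S\ne\varnothing$, the product term is even and $\deg(x)\equiv|R|-\varepsilon_x\equiv1-\varepsilon_x\pmod2$. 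Hence all degrees are even iff $\varepsilon_x=1$ for every vertex, i.e. every $x\in R\setminus J(R)$ lies in $\prod_i(J(R_i)+\{0,\pm1\})$. In odd characteristic $1\ne-1$, so the cosets $J(R_i),\,1+J(R_i),\,-1+J(R_i)$ are distinct and exhaust $R_i$ precisely when $[R_i:J(R_i)]=|F_i|=3$; if some $|F_i|>3$ one can pick a unit coordinate lying in a fourth coset and complete it to a vertex with $\varepsilon_x=0$, giving odd degree. Thus the odd case is Eulerian iff every $F_i\cong\Z_3$, that is $R/J(R)\cong\Z_3\oplus\cdots\oplus\Z_3$ (which is automatically of odd order).

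It remains to show that no non-local ring of even order is Eulerian, which I would do by exhibiting an odd-degree vertex. Evaluating the product formula at $\1$ gives $\deg(\1)=|R|-\prod_i|U(R_i)|-1$, which is odd whenever $\prod_i|U(R_i)|$ is even; the product is odd only when every $R_i$ is a field of characteristic $2$. In that residual case $J(R)=0$ and evaluating at $\e_j$ yields $\deg(\e_j)=\prod_{i\ne j}|R_i|-1$, which is odd because $n\ge2$ and each $|R_i|$ is even. Combining the three cases gives the stated classification. I expect the main obstacle to be the parity bookkeeping: rewriting the degree formula cleanly, verifying that $\1$ or $\e_j$ always supplies an odd-degree vertex in the even case, and running the coset-covering argument that pins the odd case down to $|F_i|=3$.
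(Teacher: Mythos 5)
Your proof is correct and follows essentially the same route as the paper: reduce to the even-degree criterion, settle the local case by the completeness of $\J_R$ and the prime-power order of $J(R)$, and in the non-local case use the degree formula of Lemma~\ref{degree} at $\1$ and the $\e_j$ to force $|R|$ odd, then pin down $|F_i|=3$ via the coset condition governing $\varepsilon_x$. Your version merely spells out the parity bookkeeping that the paper compresses into ``by using Lemma~\ref{degree} once more.''
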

\begin{proof}
Let $R=R_1\oplus\cdots\oplus R_n$ be the decomposition of the finite ring $R$ with Eulerian Jacobson graph into local rings $R_i$ with associated fields $F_i$, respectively. If $R=R_1$ is a local ring, then by \cite[Theorem 2.2]{aa-ae-mfdg} and the fact that $\J_R$ is connected, $F_1\cong\Z_2$ and $\J_R$ is a complete graph. Thus $\deg(1)=|J(R)|-1$ is even, that is, $|J(R)|$ is odd, and by invoking the fact that a finite local ring has prime power order, it follows that $|J(R)|=1$. Hence $R\cong\Z_2$.

Now suppose that $n>1$. We first observe, by Lemma \ref{degree}, that $\deg(\e_i)=|R|/|F_i|-1$ is even and hence $|R|/|F_i|$ is odd for all $i=1,\ldots,n$. Hence $|R|$ is odd and by using Lemma \ref{degree} once more, $|F_i|=3$ for all $i=1,\ldots,n$. The converse follows directly from Lemma \ref{degree}.
\end{proof}
\begin{theorem}\label{Euleriantrail}
Let $R$ be a finite ring, which is not Eulerian. Then $\J_R$ contains an Eulerian trail if and only if $R\cong\Z_4$, $\Z_2[x]/(x^2)$, $\Z_2\oplus\Z_2$ or $\Z_2[x]/(x^2+x+1)\oplus\Z_2$.
\end{theorem}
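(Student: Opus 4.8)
The plan is to invoke Euler's classical criterion: a finite connected graph admits an Eulerian trail precisely when it has either zero or exactly two vertices of odd degree, the former case producing a closed trail, i.e.\ an Eulerian tour. Since $R$ is assumed \emph{not} Eulerian, the zero-odd-vertex case is excluded, so the task is to characterise when $\J_R$ is connected and has exactly two vertices of odd degree. Connectivity is already settled by the structural facts quoted in the introduction: a non-local $R$ always has $\J_R$ connected, whereas a local $R$ has $\J_R$ connected only when its residue field is $\Z_2$, in which case $\J_R$ is the complete graph on the $|J(R)|$ units of $R$. The problem therefore splits into a local and a non-local case, and in each I count odd-degree vertices.

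In the local case with residue field $\Z_2$ the graph is $K_{|J(R)|}$, which is regular of degree $|J(R)|-1$; hence its vertices are either all of even degree (when $|J(R)|$ is odd, i.e.\ $\J_R$ is Eulerian, excluded) or all of odd degree (when $|J(R)|$ is even). Exactly two odd-degree vertices thus forces $|J(R)|=2$, so $|R|=4$. Running through the finite local rings of order $4$ with residue field $\Z_2$ leaves precisely $\Z_4$ and $\Z_2[x]/(x^2)$; the remaining order-$4$ local ring $\Z_2[x]/(x^2+x+1)\cong\mathbb{F}_4$ is a field and is excluded. This disposes of the local case.

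For the non-local case I would feed the support of a vertex into Lemma \ref{degree}. Writing $X=\{i:x_i\notin J(R_i)\}$, the main term $|R|\bigl(1-\prod_{i\in X}(1-1/|F_i|)\bigr)$ depends only on $X$, while $\deg(x)$ differs from it by the correction $\varepsilon_x\in\{0,1\}$; hence the parity of $\deg(x)$ is governed by the pair $(X,\varepsilon_x)$. I would group the vertices by their support $X$ and, inside each group, by the value of $\varepsilon_x$, computing the parity of the main term in terms of the $|F_i|$ and the $|J(R_i)|$. Splitting on the parity of $|R|$ is natural: when $|R|$ is odd every main term turns out to have the same (odd) parity, so the odd-degree vertices are exactly those with a prescribed value of $\varepsilon_x$, whereas when $|R|$ is even the parity of the main term genuinely depends on $X$. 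Summing these contributions, and cross-checking the total against $2|E(\J_R)|$ from Corollary \ref{edges}, reduces the theorem to a Diophantine condition of the shape ``the number of odd-degree vertices equals $2$'' in the parameters $n$, the $|F_i|$ and the $|J(R_i)|$, which I would solve by showing that all but the smallest configurations force strictly more than two odd-degree vertices.

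The main obstacle is exactly this final counting step. The number of odd-degree vertices is very sensitive to the precise description of $\varepsilon_x$ in Lemma \ref{degree} and to how factors of even and odd order interact, so the delicate point is to pin $\varepsilon_x$ down correctly coordinate by coordinate and then to bound the resulting sum from below for large rings while isolating the borderline configurations $\Z_2\oplus\Z_2$ and $\Z_2[x]/(x^2+x+1)\oplus\Z_2\cong\mathbb{F}_4\oplus\Z_2$. I expect the semisimple subcase $R=S\oplus F$ to be the cleanest entry point, with a nontrivial Jacobson radical absorbed afterwards into the same parity bookkeeping; certifying that no larger ring slips through is where the real work lies, and it is the step I would scrutinise most carefully.
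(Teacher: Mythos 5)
Your overall strategy is the same as the paper's: Euler's criterion reduces the theorem to counting odd-degree vertices via Lemma \ref{degree}, and your treatment of the local case (complete graph $K_{|J(R)|}$, regular of degree $|J(R)|-1$, forcing $|J(R)|=2$ and hence $R\cong\Z_4$ or $\Z_2[x]/(x^2)$) is complete and correct. The gap is the non-local case, which is the entire substance of the theorem: you describe a plan (group vertices by support, split on the parity of $|R|$, cross-check against Corollary \ref{edges}, and solve the resulting ``Diophantine condition''), but you do not execute the step that eliminates all but the two surviving rings, and you say yourself that this is the part you have not secured. A proposal that defers exactly the decisive counting is not yet a proof, and the global sum-over-supports bookkeeping you sketch is also heavier than necessary.

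The paper closes this case with a few targeted observations rather than a global count, and it is worth seeing why they suffice. First, $\deg$ is constant on cosets of $J(R)$ (both the support set $X$ and $\varepsilon_x$ in Lemma \ref{degree} depend only on $x+J(R)$), so the number of odd-degree vertices is a multiple of $|J(R)|$; having exactly two forces $|J(R)|\leq2$. If $|J(R)|=2$ one checks directly that $\e_1,\e_2,\1$ are three odd-degree vertices, so in fact $J(R)=0$ and $R=F_1\oplus\cdots\oplus F_n$. If $n\geq3$, comparing $\e_1,\e_2,\e_3,\1$ forces $|R|$ odd, whence $\deg(x)\equiv1+\varepsilon_x\pmod2$ for all $x$, and then for any $a\in F_1\setminus\{0,\pm1\}$ the vertices $a\e_1$, $a\e_1+\e_2$, $a\e_1+\e_3$ are three odd-degree vertices --- a contradiction (your remark that odd $|R|$ makes every main term odd is correct, but by itself it does not rule these cases out; one still has to count the vertices with $\varepsilon_x=0$). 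So $n=2$ and $R=F_1\oplus F_2$; mixed parities of $|F_1|,|F_2|$ give the three odd-degree vertices $\pm\e_1,\1$, and the congruence $\deg(x)\equiv1+\varepsilon_x\pmod2$ bounds $|F_1|,|F_2|\leq5$, after which direct verification leaves only $(|F_1|,|F_2|)=(2,2)$ and $(2,4)$ up to order, i.e.\ $\Z_2\oplus\Z_2$ and $\Z_2[x]/(x^2+x+1)\oplus\Z_2$. You would need to supply arguments of this kind (or complete your global count in full) before the non-local half of your proof can stand.
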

\begin{proof}
Let $R=R_1\oplus\cdots\oplus R_n$ be a decomposition of $R$ into local rings $R_i$ with associated fields $F_i$. Suppose that $\J(R)$ has an Eulerian trail and hence exactly two vertices with odd degrees. If $R=R_1$ is a local ring, then clearly $|F_1|=|J(R)|=2$, from which it follows that $R\cong\Z_4$ or $Z_2[x]/(x^2)$. Now, assume that $R$ is not a local ring. Since the elements in any right coset of $J(R)$ as an additive subgroup of $R$ have the same degrees, we should have $|J(R)|\leq2$. If $|J(R)|=2$, then $\e_1,\e_2,\1$ have odd degrees contradicting our assumption. Hence $J(R)=0$. If $n\geq3$, then since at most two vertices among $\e_1,\e_2,\e_3,\1$ have odd degrees, we conclude that $|R|$ is odd. Then by Lemma \ref{degree}, $\deg(x)\equiv1+\varepsilon_x\pmod2$ for every $x\in R\setminus J(R)$. Hence $|F_i|\geq4$ for some $i$ and we may assume that $i=1$. If $a\in F_1\setminus\{0,\pm1\}$, then $a\e_1,a\e_1+\e_2,a\e_1+\e_3$ have odd degrees, which is a contradiction. Therefore $n=2$ and $R=F_1\oplus F_2$. If $|F_1|\not\equiv|F_2|\pmod2$, say $|F_1|$ is odd and $|F_2|$ is even, then 
$\pm\e_1,\1$ have odd degrees, which is a contradiction. Thus $|F_1|\equiv|F_2|\pmod2$. Since by Lemma \ref{degree}, $\deg(x)\equiv1+\varepsilon_x\pmod2$ for every $x\in R\setminus J(R)$, it follows that $|F_1|,|F_2|\leq5$. Now, a simple verification shows that $(|F_1|,|F_2|)=(2,2)$, $(4,2)$ or $(2,4)$ so that $R\cong\Z_2\oplus\Z_2$ or $\Z_2[x]/(x^2+x+1)\oplus\Z_2$, as required.
\end{proof}
\section{Induced cycles and paths}
In this section, we shall compute the length of the longest induced cycles and paths in Jacobson graphs. As it is mentioned before, induced cycles and paths are important in studying other graph theoretical concepts. We begin with calculating the length of the longest induced cycles and then we use the same arguments to find the length of the longest induced paths.

Let $l_c(\J_R)$ denote the length of the longest induced cycles in $\J_R$. If $(R,\m)$ is a finite local ring with associated field $F$, then by \cite[Theorem 2.2]{aa-ae-mfdg},
\[l_c(\J_R)=\begin{cases}\infty,&\m=0,\\\infty,&|\m|=2,|F|\leq3,\\3,&|\m|\geq3,|F|\leq3,\\4,&\m\neq0,|F|\geq4.\end{cases}\]

Now, we obtain the length of the longest induced cycles of finite non-local rings. For this, let $\varepsilon_F=0$ if $|F|$ is even and $\varepsilon_F=1$ if $F$ is odd for any finite field $F$.
\begin{theorem}\label{maximalinducedcycle}
Let $R=R_1\oplus\cdots\oplus R_n$ ($n>1$) be the decomposition of a finite ring $R$ into finite local rings $R_i$ with associated fields $F_i$, respectively. If $R\not\cong\Z_2\oplus\Z_2$, then $l_c(\J_R)=3$ if $J(R)\neq0$ and $R/J(R)\cong\Z_2\oplus\Z_2$, and
\[l_c(\J_R)=|F_1|+\cdots+|F_{n-1}|-(n-1)+\theta^*,\]
where
\[\theta^*=2\theta-\left\lfloor\frac{4}{|F_n|+4-2\theta-\varepsilon_{F_n}}\right\rfloor\]
and
\[\theta=\frac{1}{2}\min\left\{|F_n|+\varepsilon_{F_n},\sum_{i=1}^{n-1}(|F_i|+\varepsilon_{F_i})\right\}\]
otherwise.
\end{theorem}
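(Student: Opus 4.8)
The plan is to reduce the whole statement to the semisimple quotient $S=R/J(R)=F_1\oplus\cdots\oplus F_n$ and then to prove the formula for $S$ by a matching upper bound and an explicit construction, exactly in the spirit of the figures accompanying Theorem~\ref{Hamiltonian}.

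\emph{Reduction to the semisimple case.} Since $x\sim y$ in $\J_R$ depends only on the images $\bar x,\bar y$ in $R/J(R)$ (two distinct representatives of \emph{distinct} quotient vertices are adjacent exactly when their images are), lifting an induced cycle of $\J_{R/J(R)}$ to coset representatives gives $l_c(\J_R)\ge l_c(\J_{R/J(R)})$. For the reverse I would analyse the within-coset structure: a coset $x+J(R)$ whose image has a nonzero self-inverse coordinate is a \emph{complete} subgraph, while a coset whose image has none is an independent set of mutually \emph{twin} vertices (identical neighbourhoods). An induced cycle of length $\ge5$ meets each such clique in at most two consecutive vertices and can use a pair of independent twins only inside a length-$4$ gadget; hence no such coset lengthens a cycle beyond what $\J_{R/J(R)}$ already supports. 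Combined with the fact recalled in the introduction that every non-local $\J_R$ with $R\not\cong\Z_2\oplus\Z_2$ has girth $3$, this gives $l_c(\J_R)=\max\{3,\,l_c(\J_{R/J(R)})\}$: the maximum is $3$ precisely when $R/J(R)\cong\Z_2\oplus\Z_2$ (whose graph is an acyclic path, so the radical cliques are what create the first triangles), and equals $l_c(\J_{R/J(R)})$ otherwise. In that special sub-case $\J_R$ is a clique-sum of two complete graphs along the coset of $\1$, hence chordal, giving $l_c=3$ directly.

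\emph{Upper bound for $S$.} Let $\C=v_1\cdots v_m$ be an induced cycle with $m\ge4$, and attach to each edge $v_jv_{j+1}$ a witnessing coordinate $i$, so $(v_j)_i(v_{j+1})_i=1$. Fixing $i$ and reading the cyclic word $(v_1)_i,\ldots,(v_m)_i$, the chord-free condition forces strong repetition constraints: a nonzero self-inverse value of $F_i$ occupies at most two consecutive positions, and the positions carrying a non-self-inverse pair $\{a,a^{-1}\}$ form a single alternating block $a,a^{-1},a$ of length at most three (a fourth position would put $a\cdot a^{-1}=1$ on a chord). Thus each self-inverse value witnesses at most one edge and each inverse pair at most two, so coordinate $i$ witnesses at most $|F_i|-1$ edges. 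This yields the crude bound $m\le\sum_i(|F_i|-1)$, which is not tight when one field dominates. Grouping coordinates $1,\ldots,n-1$ against coordinate $n$, I would then show that $\C$ splits into alternating maximal ``coordinate-$n$'' and ``other-coordinate'' blocks, that each block is pinned to a distinct inverse-orbit of the side it uses, and that therefore the number of blocks is at most $2\theta$ with $\theta=\min\{c_n,\sum_{i<n}c_i\}$ and $c_i=(|F_i|+\varepsilon_{F_i})/2$ the number of inverse-orbits of $F_i^\ast$. Counting the edges each block can carry, with the extremal parity corrected by the floor term, produces $m\le\sum_{i<n}(|F_i|-1)+\theta^\ast$.

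\emph{Construction and the main obstacle.} For the matching lower bound I would build an explicit induced cycle on the grid $F_1\times\cdots\times F_n$: a backbone sweeping all nonzero entries of the first $n-1$ coordinates accounts for the $\sum_{i<n}(|F_i|-1)$ term, into which I splice $\theta^\ast$ coordinate-$n$ detours, each a $C_4$-type gadget built from one inverse-orbit of $F_n$ (the same gadgets visible in the $E\oplus\Z_3$ and $K\oplus\Z_2\oplus\Z_2$ pictures), arranged so the walk closes up while staying chord-free; the parity cases of $|F_n|$ are handled by separate but parallel patterns. The hard part is matching the two sides in the balanced/dominant regime, which is exactly what the expression $\theta^\ast=2\theta-\lfloor 4/(|F_n|+4-2\theta-\varepsilon_{F_n})\rfloor$ encodes: one must prove that no induced cycle can reuse a coordinate-$n$ orbit for more edges than the alternation budget $2\theta$ permits, and simultaneously check that the construction attains that budget, including the delicate floor (parity) correction governed by $\varepsilon_{F_n}$. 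Verifying that the twin-gadget $C_4$'s created by a nonzero radical never exceed this bound is the companion subtlety that closes the reduction.
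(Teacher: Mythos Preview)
Your plan follows the paper's proof closely: reduce to the semisimple quotient, partition the edges of a longest induced cycle according to whether coordinate $n$ witnesses them, and bound the resulting alternating $\P_k/\Q_k$ blocks via the inverse-orbit counts $c(F_i)=(|F_i|+\varepsilon_{F_i})/2$, with the floor correction arising from the at most two non-bipartite orbits of $F_n^\ast$. The one technical device you omit is the paper's normalization of selecting, among all longest induced cycles, one with the \emph{maximum number of zero coordinates}---this is what makes the consecutive-block structure of each pair $\{a,a^{-1}\}$ immediate---but your direct chord-freeness argument reaches the same conclusion, and the paper is just as terse as you are on the construction side.
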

\begin{proof}
First we show that $l_c(\J_R)=l_c(\J_{R/J(R)})$ whenever $J(R)=0$ or $R/J(R)\not\cong\Z_2\oplus\Z_2$. Clearly $l_c(\J_{R/J(R)})\leq l_c(\J_R)$. Suppose on the contrary that $l_c(\J_{R/J(R)})<l_c(\J_R)$. Then, in a longest induced cycle $\C$ of $\J_R$, there exist two vertices $x,y\in V(\C)$ belonging to the same coset of $J(R)$ that is $x+J(R)=y+J(R)$. By \cite[Lemma 2.1]{aa-ae-mfdg}, we can easily see that $|\C|\leq4$. Hence $l_c(\J_{R/J(R)})\leq3$ and by \cite[Theorem 4.6]{aa-ae-mfdg}, it follows that $J(R)\neq0$ and $R/J(R)\cong\Z_2\oplus\Z_2$, a contradiction. 

If $l_c(\J_R)=3$, then by \cite[Theorem 4.6]{aa-ae-mfdg}, $R/J(R)\cong\Z_2\oplus\Z_2$, $\Z_2\oplus\Z_3$ or $\Z_2\oplus\Z_2\oplus\Z_2$ and we are done. 

Similarly, if $l_c(\J_R)=4$, then again by \cite[Theorem 4.6]{aa-ae-mfdg}, $R/J(R)\cong\Z_2\oplus F$ ($F$ is a finite field with $|F|\geq4$), $\Z_3\oplus\Z_3$, $\Z_2\oplus\Z_2\oplus\Z_3$ or $\Z_2\oplus\Z_2\oplus\Z_2\oplus\Z_2$ and the result follows.

Now, suppose that $l_c(\J_R)\geq5$. By the preceding paragraph, we can assume without loss of generality that $J(R)=0$ and hence $R=F_1\oplus\cdots\oplus F_n$. We first show that 
\[l_c(\J_R)\leq|F_1|+\cdots+|F_{n-1}|-(n-1)+\theta^*,\]
where $\theta^*$ is given in the theorem. For this let $c(F_i)=(|F_i|+\varepsilon_{F_i})/2$ be the number of connected components of $\J_{F_i}$. Then, the number $c_{bp}(F_i)$ of bipartite components (edges) of $\J_{F_i}$ equals $(|F_i|-2-\varepsilon_{F_i})/2$. Let $\C$ be a longest induced cycle in $\J_R$ with maximum number of zeros on its elements coordinates. If $\pi_i$ denotes the projection on the $i$th coordinate, then for every $x,y,z\in V(\C)$ such that $x\sim y\sim z$ and $\pi_i(y)\neq0$, we have either $\pi_i(x)=\pi_i(y)^{-1}$ or $\pi_i(z)=\pi_i(y)^{-1}$ for otherwise by setting $\pi_i(y)=0$, we reach to an induce cycle with the same length as $\C$ and more zeros than $\C$, contradicting our assumption. Hence for every $a\in F_i\setminus\{0\}$, either $a$ and $a^{-1}$ does not appear in the $i$th coordinates of vertices of $\C$, or they constitute a consecutive sequence of the form 
\[\{a,a^{-1}\},\ \ \  \{a^{-1},a\}, \ \ \ \{a,a^{-1},a\},\ \ \ or\ \ \ \{a^{-1},a,a^{-1}\},\]
in which the last two sequences appear only if $a\neq\pm1$. Moreover, $a$ contributes at most two edges to the cycle and it contributes two edges only if $a\neq\pm1$. Now suppose that $\pi_n(\C)\setminus\{0\}=\{a_1^{\pm1},\ldots,a_m^{\pm1}\}$. Let $f:E(\C)\longrightarrow\{0,1\}$ be an edge labeling of $\C$ defined by
\[f(\{x,y\})=\begin{cases}1,&\pi_n(x)\pi_n(y)=1,\\0,&\mbox{otherwise}.\end{cases}\]
This labeling partitions the edges of $\C$ into $2m$ sets of consecutive edges, alternately labeled by zeros and ones. Let $\P_1,\ldots,\P_m$ and $\Q_1,\ldots,Q_m$ be the sets of consecutive edges labeled with zeros and ones, respectively. Since $\pi_n(x)\pi_n(y)\neq1$ for each edge $\{x,y\}\in\P_k$ there exists $i<n$ such that $\pi_i(x)\pi_i(y)=1$, that is, $x,y$ are connected via their $i$th coordinate. By the aforementioned discussions the number of such connections are at most 
\[(|F_1|-1)+\cdots+(|F_{n-1}|-1)\]
and hence
\[|E(\P_1)|+\cdots+|E(\P_m)|\leq|F_1|+\cdots+|F_{n-1}|-(n-1).\]
With the same reason, the number $m$ is at most
\[c(F_1)+\cdots+c(F_{n-1}).\]
On the other hand, $m\leq c(F_n)$. Hence
\[m\leq\min\{c(F_n),c(F_1)+\cdots+c(F_{n-1})\}.\]
Now since $|E(\Q_i)|\leq2$ and the number of $Q_i$ with $|E(\Q_i)|=2$ is at most the number of bipartite components of $\J_{F_n}$, it follows that
\[|E(\Q_1)|+\cdots+|E(\Q_m)|\leq 2m-\max\left\{0,m-c_{bp}(F_n)\right\}.\]
As $m-c_{bp}(F_n)\leq2$ and
\[\left\lfloor\frac{2}{3-x}\right\rfloor=\max\{0,x\},\]
whenever $x\leq2$, it follows that
\begin{align*}
|E(\Q_1)|+\cdots+|E(\Q_m)|&\leq2m-\left\lfloor\frac{2}{3-c_{bp}(F_n)}\right\rfloor\\
&=2m-\left\lfloor\frac{4}{|F_n|+4-2m-\varepsilon_{F_n}}\right\rfloor.
\end{align*}
Therefore
\begin{align*}
l_c(\J_R)=|\C|&=|E(\P_1)|+\cdots+|E(\P_m)|+|E(\Q_1)|+\cdots+|E(\Q_m)|\\
&\leq|F_1|+\cdots+|F_{n-1}|-(n-1)+\theta^*,
\end{align*}
as required. According to the above discussions it is straightforward to construct an induced cycle, whose length is equal to the given upper bound. The proof is complete.
\end{proof}

The same as for cycles, let $l_p(\J_R)$ denoted the length of the longest induced paths in $\J_R$. If $(R,\m)$ is a finite local ring with associated field $F$, then
\[l_p(\J_R)=\begin{cases}0,&\m=0,|F|\leq3,\\1,&\m=0,|F|\geq4,\\1,&\m\neq0,|F|\leq3,\\2,&\m\neq0,|F|\geq4.\end{cases}\]

For a finite non-local ring, the length of the longest induced paths is given by the following theorem.
\begin{theorem}\label{maximalinducedpath}
Let $R$ be finite non-local ring. Then $l_p(\J_R)=l_c(\J_R)$ except when $R\cong\Z_2\oplus\Z_2$ or $R/J(R)\cong\Z_2\oplus F$ with $|F|\geq7$. Moreover, $l_p(\Z_2\oplus\Z_2)=2$ and $l_p(\Z_2\oplus F)=5$ whenever $|F|\geq7$.
\end{theorem}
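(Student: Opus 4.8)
The plan is to follow the proof of Theorem~\ref{maximalinducedcycle} step for step, tracking the two ways in which a path differs from a cycle: a path does not close up, so its two end vertices are subject to no reciprocity constraint, and the blocks of its edge labelling need not come in equal numbers. I expect these open ends to buy at most one extra edge over the cyclic bound, so the whole statement should come down to deciding when that extra edge is actually realised. As a first step I would reduce to semisimple rings exactly as in the opening paragraph of the proof of Theorem~\ref{maximalinducedcycle}: if a longest induced path used two vertices $x,y$ in a single coset of $J(R)$, then by \cite[Lemma 2.1]{aa-ae-mfdg} they are twins, and an induced path meets any class of mutually adjacent twins in at most two vertices, so the path would be short and \cite[Theorem 4.6]{aa-ae-mfdg} would confine $R/J(R)$ to its short list. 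This yields $l_p(\J_R)=l_p(\J_{R/J(R)})$ except when $R/J(R)\cong\Z_2\oplus F$, and together with the cycle reduction $l_c(\J_R)=l_c(\J_{R/J(R)})$ it reduces the identity $l_p=l_c$ to the semisimple case outside that family.

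For a semisimple $R=F_1\oplus\cdots\oplus F_n$ I would rerun the labelling argument of Theorem~\ref{maximalinducedcycle} on a longest induced path, labelling an edge by $1$ when its endpoints are reciprocal in the distinguished coordinate and by $0$ otherwise. The zero-edge and one-edge budgets are the same as in the cyclic estimate, with the single proviso that the forcing used there now applies only at interior vertices, the two endpoints being free. Estimating the endpoint contribution should show that the length exceeds the cyclic bound by at most one edge, and that this surplus can arise only when the distinguished coordinate supports no two-edge one-block, essentially when a factor equals $\Z_2$. For the matching lower bound I would take the explicit induced cycles of Theorem~\ref{maximalinducedcycle}, cut them open and extend them by one further vertex, obtaining induced paths of length $l_c$ -- the extra vertex being exactly what a path gains by not returning to its start. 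The crux of the whole argument is precisely this endpoint bookkeeping: one must show that whenever the free ends do contribute their extra edge, the path can still be closed into an induced cycle of at least its own length in every semisimple ring other than $\Z_2\oplus F$ with $|F|\ge7$, so that no surplus survives. Pinning down this closability dichotomy, rather than the routine constructions, is where I expect the real difficulty to lie.

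The genuinely exceptional rings I would settle by direct inspection of $\J_R$. For $R\cong\Z_2\oplus\Z_2$ the graph is the path on three vertices, so $l_p=2$. For $R=\Z_2\oplus F$ the vertices with first coordinate $1$ form a clique $T$; the vertices with first coordinate $0$ induce the disjoint edges $\{(0,b),(0,b^{-1})\}$ with $b\ne\pm1$ together with the isolated vertices $(0,\pm1)$; and each vertex of $T$ has at most one neighbour outside $T$. Since an induced path meets $T$ in at most two, necessarily consecutive, vertices, it is a single block in $T$ flanked by at most two further vertices (a reciprocal pair) on each side, whence $l_p\le5$, with equality exactly when two distinct reciprocal pairs exist, that is when $|F|\ge7$. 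Hence $l_p=5>4=l_c$ for $|F|\ge7$, while $|F|\in\{4,5\}$ gives $l_p=4=l_c$. The non-semisimple members of this family, with $J(R)\ne0$, follow by lifting: representatives of a longest induced path in the quotient give $l_p(\J_R)\ge l_p(\J_{\Z_2\oplus F})$, so $l_p>l_c$ once $|F|\ge7$, while the twin argument caps $l_p$ at $l_c$ when $|F|\in\{4,5\}$, which completes the list of exceptions.
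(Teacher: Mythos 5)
Your proposal follows essentially the same route as the paper: reduce modulo $J(R)$ via the coset/twin argument of \cite[Lemma 2.1]{aa-ae-mfdg}, re-run the edge-labelling bound from Theorem \ref{maximalinducedcycle} on a longest induced path in the generic case $l_c(\J_R)\geq5$, and settle the small quotients by direct inspection. Your explicit clique-plus-matching analysis of $\J_{\Z_2\oplus F}$ (giving $l_p\leq5$ with equality exactly when two distinct reciprocal pairs exist, i.e. $|F|\geq7$) is carried out in more detail than the paper's ``simple verification'' but reaches the same conclusions, so the argument is correct and not materially different.
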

\begin{proof}
First we show that $l_p(\J_R)=l_p(\J_{R/J(R)})$. Clearly, $l_p(\J_{R/J(R)})\leq l_p(\J_R)$. Suppose on the contrary that $l_p(\J_{R/J(R)})<l_p(\J_R)$. Then, in a longest induced path $\P$ of $\J_R$, there exist two vertices $x,y\in V(\P)$ belonging to the same coset of $J(R)$, that is, $x+J(R)=y+J(R)$. By \cite[Lemma 2.1]{aa-ae-mfdg}, it is easy to see that $|\P|\leq2$. Hence $l_p(\J_{R/J(R)})\leq1$, which contradicts \cite[Theorem 3.5]{aa-ae-mfdg}.

If $l_c(\J_R)\geq5$, then by using a same method as in the proof of Theorem \ref{maximalinducedcycle}, we can show that $l_p(\J_R)=l_c(\J_R)$. Now suppose that $l_c(\J_R)\leq4$. Then by \cite[Theorem 4.6]{aa-ae-mfdg}, $R/J(R)\cong\Z_2\oplus F$, $\Z_3\oplus\Z_3$, $\Z_2\oplus\Z_2\oplus\Z_2$, $\Z_2\oplus\Z_2\oplus\Z_3$, or $\Z_2\oplus\Z_2\oplus\Z_2\oplus\Z_2$, where $F$ is a finite field. A simple verification shows that $l_p(\J_R)=l_c(\J_R)$ if $R\not\cong\Z_2\oplus F$ ($|F|\geq7$, or $|F|=2$ and $J(R)=0$). Finally, if $R\cong\Z_2\oplus\Z_2$, then $l_p(\J_R)=2$ and if $R\cong\Z_2\oplus F$ with $|F|\geq7$, then $l_p(\J_R)=5$. The proof is complete.
\end{proof}

\end{document}